\begin{document}
%\doi{10.1080/0003681YYxxxxxxxx}
% \issn{1563-504X}
%\issnp{0003-6811}
%\jvol{00} \jnum{00} \jyear{2008} \jmonth{January}

%\markboth{Taylor \& Francis and I.T. Consultant}{Applicable Analysis}
%
%\articletype{GUIDE}

\title{Unique continuation and approximate controllability\\for a degenerate parabolic equation}

\author{P. Cannarsa$^{\rm a}$$^{\ast}$\thanks{$^\ast$Corresponding author. Email: cannarsa@mat.uniroma2.it
\vspace{6pt}} 
J. Tort$^{\rm b}$ and M. Yamamoto$^{\rm c}$\\\vspace{6pt}  $^{\rm a}${\em{Dipartimento di Matematica, Universit\`a di Roma ``Tor Vergata'',
Via della Ricerca Scientifica 1, 00133 Roma, Italy}}; 
$^{\rm b}${\em{Institut de Math\'ematiques de Toulouse, U.M.R. C.N.R.S. 5219, Universit\'e Paul Sabatier Toulouse III, 118 route de Narbonne, 31 062 Toulouse Cedex 4, France}};  $^{\rm c}${\em{Department of Mathematical Sciences, The University of Tokyo, Komaba Meguro Tokyo 153-8914, Japan}}\\\vspace{6pt}\received{v3.3 released May 2008} }

\maketitle

\begin{abstract}
This paper studies unique continuation for weakly degenerate parabolic equations in one space dimension. A new Carleman estimate of local type is obtained to deduce that all solutions that vanish on the degeneracy set, together with their conormal derivative, are identically equal to zero. An approximate controllability result for weakly degenerate parabolic equations under Dirichlet boundary condition is deduced.\bigskip

\begin{keywords}degenerate parabolic equations; unique continuation;
approximate controllability; local Carleman estimate
%{\bf{(Authors: Please provide three to six keywords taken
%from terms used in your manuscript)}}
\end{keywords}
\begin{classcode} 35K65; 93B05; 35A23; 93C20
%{\bf{(... for example; authors are encouraged to provide two to six 2000 Mathematics Subject Classification codes)}}
\end{classcode}\bigskip

\end{abstract}

\section{Introduction}
We consider a parabolic equation degenerating at the boundary 
of the space, which is related to a motivating example of 
a Crocco-type equation coming from
the study of the velocity field of a laminar flow on a 
flat plate (see, e.g., \cite{Bu}).

The null controllability of degenerate parabolic operators in one space dimension has been well studied for locally distributed controls. For instance, in \cite{Cannarsa, Cannarsa1}, the problem 
\\
\begin{equation*}
\left\{ 
\begin{array}{lc}
u_{t}-(x^\alpha u_{x})_{x}=\chi_\omega h &\quad \left( t,x\right) \in Q:=(0,1)\times(0,T)   \\ 
u\left(1, t\right)=0 &\quad t\in \left(0,T\right) \\ 
\mbox{and }\left\{ 
\begin{array}{ll}
u\left( 0,t\right) =0 & \quad \mbox{for }0\leq \alpha <1 \\ 
(x^\alpha u_{x})\left( 0,t\right) =0 & \quad \mbox{for }1\leq \alpha <2
\end{array}
\right.
& \quad t\in \left( 0,T\right) \\ 
u\left(x, 0\right) =u_{0}(x) & \quad x\in \left( 0,1\right)\,,
\end{array}
\right.  
\end{equation*}
\\
where $\chi_\omega$ denotes the characteristic function of $\omega=(a,b)$ with $0<a<b<1$, is shown to be null controllable in $L^2(0,1)$ in any time $T>0$. Generalizations of the above result to semilinear problems and nondivergence form operators can be found in \cite{Alabau} and \cite{non-div0, non-div}, respectively. The global Carleman estimate derived in \cite{Cannarsa} was also used in \cite{Cannarsa5} to prove Lipschitz stability estimates for inverse problems relative to degenerate parabolic operators.

It is a commonly accepted viewpoint that, if a system is controllable via locally distributed controls, then it is also controllable via boundary controls and vice versa. This is indeed the case for uniformly parabolic operators. 
For degenerate operators, on the contrary, no null controllability result  is available in the literature---to our best knowledge---when controls act on `degenerate' parts of the boundary. Indeed, in this case,
switching from locally distributed to boundary controls is by no means automatic for at least two reasons. 
In the first place,  Dirichlet boundary data can only be imposed in weakly degenerate settings (that is, when $0\le\alpha<1$), since otherwise solutions may not define a trace on the boundary, see \cite[section 5]{CRV}. Secondly, the standard technique which consists in enlarging the space domain and placing an `artificial' locally distributed control in the enlarged region, would lead to an unsolved problem in the degenerate case. Indeed, such a procedure requires  being able to solve the null controllability problem for an operator which degenerates in the interior of the space domain, with controls acting only on one side of the domain with respect to the point of degeneracy.

In this paper, we establish a simpler result, that is,
the approximate controllability via controls at
the `degenerate' boundary point for the weakly degenerate parabolic 
operator
$$
Pu:=u_t-(x^\alpha u_x)_x\quad\text{in}\quad Q \qquad(0\le \alpha<1).
$$ 
In order to achieve this, we follow the classical duality argument that reduces the problem to the 
unique continuation for the adjoint of $P$, that is, the operator
\begin{equation*}
Lu:=u_t+(x^\alpha u_x)_x\quad\text{in}\quad Q
\end{equation*}
with
 boundary conditions
\begin{equation}\label{intro:bc}
u(0,t)=(x^\alpha u_x)(0,t)=0\,.
\end{equation}
To solve such a problem, in section 2 of this paper
we derive new local Carleman estimates for  $L$, 
in which the weight function exhibits a decreasing 
behaviour with respect to $x$ (Theorem~\ref{theo1}).
Then, in section 3, we obtain our unique continuation result proving that any solution $u$ of $Lu=0$ in $Q$, which satisfies \eqref{intro:bc}, must vanish identically in $Q$ (Theorem 3.1). 
Finally, in section 4, we show how to deduce the approximate 
controllability with Dirichlet boundary control for the weakly degenerate problem ($0\leq \alpha <1$)\\
\begin{equation*}
\left\{ 
\begin{array}{lc}
u_{t}-(x^\alpha u_{x})_{x}=0 &\quad \left( x,t\right) \in Q \\ 
u\left( 1,t\right)=0 &\quad t\in \left(0,T\right) \\ 
u\left( 0,t\right) =g(t) & \quad t\in \left( 0,T\right) \\ 
u\left( x,0\right) =u_{0}(x) & \quad x\in \left( 0,1\right)\,.
\end{array}
\right.  
\end{equation*}
The outline of this paper is the following. In section~\ref{se:Carle}, we derive our local Carleman estimate. Then, in section~\ref{se:uc}, we apply such an estimate to deduce a unique continuation result for $L$.  Finally, in section~\ref{se:ac}, we obtain approximate controllability for $P$ with Dirichlet boundary controls as a consequence of unique continuation.
\section{A Carleman estimate with decreasing-in-space weight functions}
\label{se:Carle}
We begin by recalling the definition of the  function spaces that will be used throughout this paper. The reader is referred to \cite{Alabau,Cannarsa} for more details on these spaces.

For any $\alpha\in(0,1)$ we  define $H_{\alpha}^{1}\left( 0,1\right)$ to be the space of all absolutely continuous functions $u:[0,1]\to\mathbb R$ such that
\begin{equation*}
\int_0^1x^{\alpha}|u_{x}(x)|^2\,dx <\infty 
\end{equation*}
where $u_x$ denotes the derivative of $u$. 
Like the analogous property of standard Sobolev spaces, one can prove that $H_{\alpha}^{1}\left( 0,1\right)\subset C([0,1])$. So, one can also set
\[
H_{\alpha,0}^{1}\left( 0,1\right)=\left\{ u\in H_{\alpha}^{1}\left( 0,1\right): u\left( 0\right) =u\left( 1\right) =0\right\}.
\]
Now, define the  operator $A:D(A)\subset L^{2}\left( 0,1 \right)\to L^{2}\left( 0,1 \right)$ by
\[
\begin{cases}
D(A):=\left\{ u\in H_{\alpha,0}^{1}\left( 0,1\right)~:~x^{\alpha}u_{x}\in H^{1}\left(
0,1\right) \right\} \\
Au=\left(x^\alpha u_{x}\right) _{x}\,, \quad \forall u \in D(A)\,.
\end{cases}
\]
We recall that $A$ is the infinitesimal generator of an analytic semigroup of contractions on $L^{2}\left( 0,1 \right)$, and $D(A)$ is a Banach space with the graph norm
\begin{equation*}
|u|_{D(A)}=\|u\|_{L^{2}\left( 0,1 \right)}+\|Au\|_{L^{2}\left(0,1 \right)}\,.
\end{equation*}

\begin{example}
As one can easily check by a direct calculation,  $f(x)= 1-x^{1-\alpha}$ belongs to $H_{\alpha}^{1}\left( 0,1\right)$ and
\begin{equation*}
(x^\alpha u_x)_x=0\qquad \forall x \in [0,1]\,.
\end{equation*}
However, $f\notin D(A)$ since $f(0)=1$.
\label{lem1}
\end{example}

\begin{lemma}\label{le:DA}
 Let $u\in D(A)$ be such that $x^\alpha u_x\to 0$ as $x\to 0$. Then
 \begin{equation}\label{eq:DA1}
|x^\alpha u_x(x)|\le |u|_{D(A)}\sqrt{x}\qquad\forall\, x\in [0,1]
\end{equation}
and
 \begin{equation}\label{eq:DA0}
|x^{\alpha-1} u(x)|\le \frac 2{3-2\alpha}\, |u|_{D(A)}\sqrt{x}\qquad\forall\, x\in [0,1]
\end{equation}
Moreover, for any $\beta>0$ 
%with $2\alpha+\beta-2\neq 1$ 
there is a constant $c(\beta)$ such that
\begin{equation}\label{eq:DA2}
\int_0^1x^{2\alpha+\beta-4}u^2\,dx\;+\;\int_0^1 x^{2\alpha+\beta-2}u^2_x\,dx\;\le\; c(\beta)\,|u|_{D(A)}^2\,.
\end{equation}
\begin{proof}\rm
Let $u\in D(A)$ be such that $x^\alpha u_x\to 0$ as $x\to 0$.  Since
 \begin{equation*}
x^\alpha u_x(x)=\int_0^x\frac{d}{ds}\,\Big(s^\alpha \frac{du}{ds}(s)\Big)ds\,,
\end{equation*}
\eqref{eq:DA1} follows by H\"older's inequality. Then, owing to \eqref{eq:DA1},
\begin{equation*}
|u(x)|\le \int_0^x\Big| s^\alpha  \frac{du}{ds}(s)\Big|\,s^{-\alpha}\,ds
\le |u|_{D(A)} \int_0^x s^{\frac 12-\alpha}\,ds
\end{equation*}
which in turn yields \eqref{eq:DA0}.
Next, in view of \eqref{eq:DA1},
\begin{equation*}
\int_0^1 x^{2\alpha+\beta-2}u^2_x\,dx\;\le |u|^2_{D(A)}\;\int_0^1 x^{\beta-1}\,dx = \frac 1\beta\;|u|^2_{D(A)}\,.
\end{equation*}
Finally, on account of \eqref{eq:DA0},
\begin{equation*}
\int_0^1x^{2\alpha+\beta-4}u^2\,dx \le \Big(\frac 2{3-2\alpha}\Big)^2\, |u|_{D(A)}^2\;\int_0^1 x^{\beta-1}\,dx\,.
\end{equation*}
The proof of \eqref{eq:DA2} is thus complete.
\end{proof}
\end{lemma}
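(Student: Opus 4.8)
The plan is to establish the three estimates in cascade: first derive \eqref{eq:DA1} directly from the membership $u\in D(A)$ and the boundary condition $x^\alpha u_x\to 0$; then obtain \eqref{eq:DA0} from \eqref{eq:DA1} together with $u(0)=0$; and finally deduce \eqref{eq:DA2} by integrating the two pointwise bounds just proved. The only analytic tools I expect to need are the fundamental theorem of calculus (legitimate because the functions involved are absolutely continuous) and the Cauchy--Schwarz inequality.

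For \eqref{eq:DA1}, I would start from the fact that $Au=(x^\alpha u_x)_x\in L^2(0,1)$ and that $x^\alpha u_x\to 0$ as $x\to0$, so that $x^\alpha u_x$ is precisely the primitive of $Au$ vanishing at the origin, i.e.
\begin{equation*}
x^\alpha u_x(x)=\int_0^x (s^\alpha u_s)_s\,ds=\int_0^x (Au)(s)\,ds\,.
\end{equation*}
Applying Cauchy--Schwarz to the right-hand side gives $|x^\alpha u_x(x)|\le\sqrt{x}\,\|Au\|_{L^2(0,1)}\le\sqrt{x}\,|u|_{D(A)}$, which is \eqref{eq:DA1}. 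For \eqref{eq:DA0}, I would use that $u\in H^1_{\alpha,0}(0,1)$ forces $u(0)=0$, so that $u(x)=\int_0^x u_s\,ds=\int_0^x (s^\alpha u_s)\,s^{-\alpha}\,ds$; inserting the bound \eqref{eq:DA1} yields $|u(x)|\le|u|_{D(A)}\int_0^x s^{1/2-\alpha}\,ds=\frac{2}{3-2\alpha}\,|u|_{D(A)}\,x^{3/2-\alpha}$, and multiplying by $x^{\alpha-1}$ produces exactly \eqref{eq:DA0}.

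Finally, for \eqref{eq:DA2} I would factor each integrand so as to expose the quantities already controlled: writing $x^{2\alpha+\beta-2}u_x^2=(x^\alpha u_x)^2\,x^{\beta-2}$ and $x^{2\alpha+\beta-4}u^2=(x^{\alpha-1}u)^2\,x^{\beta-2}$, the bounds \eqref{eq:DA1} and \eqref{eq:DA0} reduce both integrals to $\int_0^1 x^{\beta-1}\,dx=1/\beta$, so that one may take $c(\beta)=\beta^{-1}\bigl(1+(2/(3-2\alpha))^2\bigr)$. The one point requiring care throughout is the convergence of the various power integrals near $x=0$: this is where the hypothesis $0\le\alpha<1$ enters, since it guarantees $3/2-\alpha>0$ in \eqref{eq:DA0} and, together with $\beta>0$, the integrability of $x^{\beta-1}$ in \eqref{eq:DA2}. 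I expect no genuine obstacle beyond bookkeeping these exponents, the substantive content being concentrated in the representation formula for $x^\alpha u_x$ that makes \eqref{eq:DA1} possible.
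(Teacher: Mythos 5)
Your proposal is correct and follows essentially the same route as the paper: the representation $x^\alpha u_x(x)=\int_0^x (s^\alpha u_s)_s\,ds$ plus Cauchy--Schwarz for \eqref{eq:DA1}, then integration of $u_x=(x^\alpha u_x)x^{-\alpha}$ from $0$ using $u(0)=0$ for \eqref{eq:DA0}, and finally insertion of these pointwise bounds to reduce both integrals in \eqref{eq:DA2} to $\int_0^1 x^{\beta-1}\,dx=1/\beta$. The only difference is cosmetic: you make the constant $c(\beta)=\beta^{-1}\bigl(1+(2/(3-2\alpha))^2\bigr)$ explicit, which the paper leaves implicit.
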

\subsection{Statement of the Carleman estimate}
Let $T>0$. Hereafter, we set
\begin{equation*}
Q=(0,1) \times (0,T)\,.
\end{equation*}
Moreover, for any integrable function $f$ on $Q$, we will use the abbreviated notation
\begin{equation*}
\int_Qf=\int_Qf(x,t)dxdt\,.
\end{equation*}

Let $0<\alpha<1$ and fix $\beta \in (1-\alpha, 1-\frac{\alpha}{2})$. Define weight functions $l,p$ and $\phi$ as 
\begin{equation}
\forall t \in (0,T), \quad l(t):=\frac{1}{t(T-t)},
\label{weightl}
\end{equation}
\begin{equation}
\forall x \in (0,1), \quad p(x):=-x^\beta
\label{weightbeta}
\end{equation}
\noindent and
\begin{equation}
\forall (x,t) \in Q, \quad \phi(x,t):=p(x)l(t).
\label{weightphi}
\end{equation}

\bigskip

\noindent For any function $v \in L^2(0,T;D(A))\cap H^1(0,T;L^2(0,1))$, we set 
$$Lv:=v_t+(x^\alpha v_x)_x\,.$$ 

We will prove the following Carleman estimate:
\begin{theorem}
Let $v \in L^2(0,T;D(A))\cap H^1(0,T;L^2(0,1))$ and suppose that, for a.e. $t \in (0,T)$,
\[
v(0,t)=(x^\alpha v_x)(0,t)=v(1,t)=(x^\alpha v_x)(1,t)=0\,.
\]
\noindent Then, there exist constants $C=C(T,\alpha,\beta)>0$ and $s_0=s_0(T,\alpha,\beta)>0$ such that, for all $s\geq s_0$,
\begin{equation}
\int_Q [s^3l^3x^{2\alpha+3\beta-4}+slx^{2\alpha+\beta-4}]v^2e^{2s\phi}
+ \int_Q slx^{2\alpha+\beta-2}v_x^{2}e^{2s\phi}
\leq C \int_Q |Lv|^2e^{2s\phi}.
\label{Carleman}
\end{equation}
\label{theo1}
\end{theorem}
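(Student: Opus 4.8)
The plan is to use the classical conjugation-and-splitting technique, the whole difficulty being to control the boundary contributions at the degenerate endpoint $x=0$. After reducing to smooth functions by a density argument (so that the integrations by parts below are licit), I would set $w:=e^{s\phi}v$ and compute the conjugated operator
\[
L_sw:=e^{s\phi}L(e^{-s\phi}w)=w_t+(x^\alpha w_x)_x-2sx^\alpha\phi_x w_x-s(x^\alpha\phi_x)_x w+s^2x^\alpha\phi_x^2 w-s\phi_t w .
\]
I would then split $L_s=P_s^++P_s^-$ into a formally self-adjoint and a formally skew-adjoint part in $L^2(Q)$,
\[
P_s^+w:=(x^\alpha w_x)_x+s^2x^\alpha\phi_x^2 w-s\phi_t w,\qquad P_s^-w:=w_t-2sx^\alpha\phi_x w_x-s(x^\alpha\phi_x)_x w .
\]
It is essential to keep the zeroth-order term $-s\phi_t w$ \emph{inside} $P_s^+$: since only $|l'|\le Cl^2$ holds, treating it as an independent remainder would generate an uncontrollable term of order $s^2l^4$, whereas through the cross product below it enters only at the absorbable orders $s^2l^3$ and $sl^3$. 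One checks that $P_s^-$ is skew-adjoint: the first-order and the $(x^\alpha\phi_x)_x$ terms cancel after one integration by parts in $x$, and the $w_t$-term integrates to a $t$-boundary contribution that vanishes. Writing $\|L_sw\|^2=\|P_s^+w\|^2+\|P_s^-w\|^2+2\langle P_s^+w,P_s^-w\rangle$ and discarding the two nonnegative squares, it suffices to bound the cross term from below by the left-hand side of \eqref{Carleman}.

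Next I would expand $2\langle P_s^+w,P_s^-w\rangle$ into the individual products and integrate by parts. Using $\phi_x=-\beta x^{\beta-1}l$, $\phi_{xx}=\beta(1-\beta)x^{\beta-2}l>0$ and $(x^\alpha\phi_x)_x=-\beta(\alpha+\beta-1)x^{\alpha+\beta-2}l$, three positive volume terms arise. The products $\langle s^2x^\alpha\phi_x^2w,-2sx^\alpha\phi_x w_x\rangle$ and $\langle s^2x^\alpha\phi_x^2w,-s(x^\alpha\phi_x)_x w\rangle$ combine, after moving the derivative off $(w^2)_x$, into the leading term
\[
2\beta^3(2-\alpha-2\beta)\,s^3\int_Q l^3x^{2\alpha+3\beta-4}w^2 ,
\]
while $\langle(x^\alpha w_x)_x,-2sx^\alpha\phi_x w_x\rangle$ and the $w_x^2$-part of $\langle(x^\alpha w_x)_x,-s(x^\alpha\phi_x)_x w\rangle$ combine into
\[
2\beta(2-\alpha-2\beta)\,s\int_Q l\,x^{2\alpha+\beta-2}w_x^2 ,
\]
and the remaining, twice-integrated-by-parts piece of the latter product yields $\beta(\alpha+\beta-1)(2-\alpha-\beta)(3-2\alpha-\beta)\,s\int_Q l\,x^{2\alpha+\beta-4}w^2$. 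Here both constraints on $\beta$ are used: the upper bound $\beta<1-\frac{\alpha}{2}$ makes the common factor $2-\alpha-2\beta$ of the first two terms positive, while the lower bound $\beta>1-\alpha$ makes the factor $\alpha+\beta-1$ of the third positive (the factors $2-\alpha-\beta$ and $3-2\alpha-\beta$ being automatically positive). All other volume contributions — those produced by the $w_t$ and $\phi_t$ terms — carry the weight $x^{\alpha+2\beta-2}$ and are of the strictly lower orders $O(s^2l^3)$ and $O(sl^3)$; since $\alpha+2\beta-2>2\alpha+3\beta-4$ and $l\ge 4/T^2$, each is bounded by a small multiple of the leading term once $s\ge s_0$.

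The \emph{main obstacle} is the treatment of the boundary terms generated by the integrations by parts. Those at $t=0,T$ vanish because $l(t)\to+\infty$ and $p<0$ there, so $w=e^{s\phi}v\to0$ with all relevant weights; those at $x=1$ vanish thanks to $v(1,t)=(x^\alpha v_x)(1,t)=0$. The delicate endpoint is the degeneracy point $x=0$, where each boundary monomial has the form $x^{2\alpha+\beta-1}l\,w_x^2$, $x^{2\alpha+3\beta-3}l^3w^2$, or similar. Applying Lemma~\ref{le:DA} to $v(\cdot,t)\in D(A)$ (which satisfies $(x^\alpha v_x)(0,t)=0$), the bounds \eqref{eq:DA1}–\eqref{eq:DA0} give $|x^\alpha w_x|\le C\sqrt{x}$ and $|x^{\alpha-1}w|\le C\sqrt{x}$ near $0$, since $e^{s\phi}\to1$ and $x^{\alpha+\beta-1}v=x^\beta(x^{\alpha-1}v)\to0$ there. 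Consequently each boundary monomial is $O(x^\gamma)$ with $\gamma>0$ — for instance $x^{2\alpha+\beta-1}l\,w_x^2=O(x^\beta l)$ and $x^{2\alpha+3\beta-3}l^3w^2=O(x^{3\beta}l^3)$ — and hence vanishes as $x\to0$. Verifying this for every boundary monomial produced by the expansion is the technical heart of the proof, and it is exactly where the degeneracy of $x^\alpha$ and the admissible range of $\beta$ are genuinely exploited.

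Finally I would collect the estimates: for $s\ge s_0$ the three positive volume terms survive absorption of all lower-order contributions, giving a lower bound for $2\langle P_s^+w,P_s^-w\rangle$ by
\[
\beta^3(2-\alpha-2\beta)s^3\!\int_Q l^3x^{2\alpha+3\beta-4}w^2+\beta(2-\alpha-2\beta)s\!\int_Q l\,x^{2\alpha+\beta-2}w_x^2+c\,s\!\int_Q l\,x^{2\alpha+\beta-4}w^2 ,
\]
with $c>0$. Returning to $v$ through $w^2=v^2e^{2s\phi}$ and $w_x=e^{s\phi}(v_x+s\phi_x v)$, one relates $\int_Q sl\,x^{2\alpha+\beta-2}w_x^2$ to $\int_Q sl\,x^{2\alpha+\beta-2}v_x^2e^{2s\phi}$ up to a term of the already-controlled order $s^3l^3x^{2\alpha+3\beta-4}v^2e^{2s\phi}$ (coming from the $s^2\phi_x^2v^2$ cross contribution), which is absorbed for $s$ large. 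Combining this with $\|L_sw\|^2=\int_Q|Lv|^2e^{2s\phi}$ yields \eqref{Carleman}, and the constants $C$ and $s_0$ depend only on $T,\alpha,\beta$ through the explicit coefficients above.
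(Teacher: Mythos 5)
Your argument is correct and follows essentially the same route as the paper's proof: the same conjugation $w=e^{s\phi}v$, the identical splitting $L_s=L_s^++L_s^-$ with the $-s\phi_t w$ term kept in the self-adjoint part, the same three positive volume terms with coefficients $\beta(2-\alpha-2\beta)$, $\beta^3(2-\alpha-2\beta)$ and $\beta(\alpha+\beta-1)(2-\alpha-\beta)(3-2\alpha-\beta)$, absorption of the $x^{\alpha+2\beta-2}$-weighted remainders of order $s$ and $s^2$ for $s\ge s_0$, and the use of Lemma~\ref{le:DA} together with the boundary conditions to annihilate the boundary contributions at $x=0$. The only cosmetic difference is that you invoke a density argument to justify the integrations by parts, whereas the paper works directly in $L^2(0,T;D(A))\cap H^1(0,T;L^2(0,1))$.
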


The proof is inspired by \cite{FuIm} and \cite{Im},  where global Carleman estimates
for uniformly parabolic equations were first obtained, and by \cite{Alabau}, \cite{Cannarsa}, and \cite{Cannarsa5}, where this technique was adapted to degenerate parabolic operators by the choice of appropriate weight functions.

We now proceed to derive another Carleman  estimate which  follows  from
\eqref{Carleman} and yields  unique continuation, deferring the proof of Theorem \ref{theo1}  to the next section. 
\begin{corollary}
Let $v \in L^2(0,T;D(A))\cap H^1(0,T;L^2(0,1))$ and suppose that, for a.e. $t \in (0,T)$,
\[
v(0,t)=(x^\alpha v_x)(0,t)=v(1,t)=(x^\alpha v_x)(1,t)=0\,.
\]
\noindent Then there exist constants $C=C(T,\alpha,\beta)>0$ and $s_0=s_0(T,\alpha,\beta)>0$ such that, for all $s\geq s_0$,
\begin{equation}
\int_Q s^3l^3v^2e^{2s\phi}+\int_Q slx^{2\alpha+\beta-2}v_x^{2}e^{2s\phi} \leq 
C \int_Q |Lv|^2e^{2s\phi}\,.
\label{Carleman2}
\end{equation}
\label{cor1}
\end{corollary}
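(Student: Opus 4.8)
The plan is to read \eqref{Carleman2} as a weakening of \eqref{Carleman}, so that Corollary~\ref{cor1} follows from Theorem~\ref{theo1} by a pointwise comparison of weights rather than by any new estimate. The two statements share identical gradient terms $\int_Q slx^{2\alpha+\beta-2}v_x^{2}e^{2s\phi}$ and identical right-hand sides, so everything reduces to dominating the zeroth-order term $\int_Q s^3l^3v^2e^{2s\phi}$ of \eqref{Carleman2} by the corresponding term of \eqref{Carleman}. The key observation is that, on the whole interval $(0,1)$, the degenerate weight $x^{2\alpha+3\beta-4}$ appearing in \eqref{Carleman} is bounded below by the constant weight $1$ of \eqref{Carleman2}, precisely because its exponent is negative.

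First I would check the sign of the exponent using the standing assumption $\beta\in(1-\alpha,1-\frac\alpha2)$. Since $\beta<1-\frac\alpha2$, one has
\[
2\alpha+3\beta-4<2\alpha+3\Big(1-\frac\alpha2\Big)-4=\frac\alpha2-1<0,
\]
the last inequality holding because $0<\alpha<1$; it is the upper bound $\beta<1-\frac\alpha2$ that ensures this. Consequently, for every $x\in(0,1)$ a number in $(0,1)$ raised to a negative power is $\ge1$, so that $x^{2\alpha+3\beta-4}\ge1$. This yields the pointwise inequality
\[
s^3l^3v^2e^{2s\phi}\le s^3l^3x^{2\alpha+3\beta-4}v^2e^{2s\phi}\qquad\text{on }Q.
\]

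It then remains only to integrate and reassemble. Integrating the pointwise bound and noting that the extra summand $\int_Q slx^{2\alpha+\beta-4}v^2e^{2s\phi}$ in \eqref{Carleman} is nonnegative, I obtain
\[
\int_Q s^3l^3v^2e^{2s\phi}+\int_Q slx^{2\alpha+\beta-2}v_x^{2}e^{2s\phi}
\le\int_Q\big[s^3l^3x^{2\alpha+3\beta-4}+slx^{2\alpha+\beta-4}\big]v^2e^{2s\phi}+\int_Q slx^{2\alpha+\beta-2}v_x^{2}e^{2s\phi},
\]
whose right-hand side is exactly the left-hand side of \eqref{Carleman}. Chaining this with Theorem~\ref{theo1} gives \eqref{Carleman2} with the same constants $C=C(T,\alpha,\beta)$ and $s_0=s_0(T,\alpha,\beta)$. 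I do not expect any genuine obstacle here: the whole argument is the weight comparison $x^{2\alpha+3\beta-4}\ge1$, and the only point deserving care is confirming that the exponent is nonpositive, which is guaranteed by the constraint $\beta<1-\frac\alpha2$.
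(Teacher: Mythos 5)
Your proposal is correct and follows essentially the same route as the paper: both reduce the corollary to the pointwise bound $x^{2\alpha+3\beta-4}\ge 1$ on $(0,1)$, obtained by checking that the exponent is negative under the standing assumption $\beta<1-\frac{\alpha}{2}$ (the paper verifies the sign via $2\alpha+3\beta-4<2\alpha+4\beta-4<0$, you via direct substitution of the upper bound on $\beta$, which is an immaterial difference). The remaining steps---dropping the nonnegative term $\int_Q slx^{2\alpha+\beta-4}v^2e^{2s\phi}$ and invoking Theorem~\ref{theo1}---match the paper exactly.
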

\begin{proof}
 Since $\beta <1-\frac{\alpha}{2}$, we have that $4\beta<4-2\alpha$ and $2\alpha+4\beta-4<0$. Moreover,  $2\alpha+3\beta-4 < 2\alpha+4\beta-4<0$ since $\beta >0$. Consequently,  $x^{2\alpha+3\beta-4}\geq 1$ for all $x\in (0,1)$. Then,
\[
\int_Q s^3l^3v^2e^{2s\phi} \leq \int_Q s^3l^3x^{2\alpha+3\beta-4}v^2e^{2s\phi}
\]
\noindent and the proof  is  complete.
\end{proof}

\subsection{Proof of Theorem \ref{theo1}}
Let $v \in L^2(0,T;D(A))\cap H^1(0,T;L^2(0,1))$ and suppose  that, for a.e. $t \in (0,T)$,
\begin{equation}
v(0,t)=(x^\alpha v_x)(0,t)=v(1,t)=(x^\alpha v_x)(1,t)=0.
\label{boundaryv}
\end{equation}
\begin{lemma}
 Let $w:=ve^{s\phi}$. Then $w$ belongs to $L^2(0,T;D(A))\cap H^1(0,T;L^2(0,1))$ and satisfies,  for a.e. $t \in (0,T)$,
\begin{equation}
w(0,t)=w(1,t)=0
\label{boundaryw}
\end{equation}
and
\begin{equation}
(x^\alpha w_x)(0,t)=(x^\alpha w_x)(1,t)=0\,.
\label{boundaryw1}
\end{equation}
Moreover,  $w$ satisfies $L_s w=e^{s\phi}Lv$, where $L_sw=L_s^+w+L_s^-w$, and
\begin{equation}\label{eq:Lpm}
\begin{split}
&L_s^+w=(x^\alpha w_x)_x-s\phi_t w+s^2x^\alpha \phi_x^2w\\
&L_s^-w=w_t-2sx^\alpha\phi_xw_x-s(x^\alpha \phi_x)_xw\,.
\end{split}
\end{equation}
Furthermore, $L_s^+w, L_s^-w\in L^2(Q)$ and
\begin{multline}
\int_QL_s^+w\, L_s^-w = \frac{s}{2}\int_Q \phi_{tt}w^2+s\int_Q x^\alpha(x^\alpha\phi_x)_{xx}ww_x+2s^2\int_Q x^\alpha \phi_x \phi_{tx}w^2 \\
+ s \int _Q (2x^{2\alpha}\phi_{xx}+\alpha x^{2\alpha-1}\phi_x)w_x^2+s^3\int_Q(2x^\alpha \phi_{xx}+\alpha x^{\alpha-1}\phi_x)x^\alpha \phi_x^2w^2\,.
\label{lemACG}
\end{multline}
\end{lemma}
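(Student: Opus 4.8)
The plan is to proceed by direct computation in four stages, the last of which carries the real difficulty. First I would check the regularity and boundary behaviour of $w = ve^{s\phi}$. Since $\phi(x,t)=p(x)l(t)$ with $p(x)=-x^\beta\le 0$ and $l(t)\to+\infty$ as $t\to 0^+$ or $t\to T^-$, the factor $e^{s\phi}$ is bounded on $Q$ and, together with all its derivatives, decays exponentially as $t\to 0,T$; hence multiplication by $e^{s\phi}$ preserves the class $L^2(0,T;D(A))\cap H^1(0,T;L^2(0,1))$. The Dirichlet conditions \eqref{boundaryw} are immediate from \eqref{boundaryv}. For the conormal conditions \eqref{boundaryw1} I would write $x^\alpha w_x = e^{s\phi}(x^\alpha v_x) + s\,x^\alpha\phi_x\,v\,e^{s\phi}$, note that $x^\alpha\phi_x=-\beta x^{\alpha+\beta-1}l$, and use $\alpha+\beta-1>0$ together with the decay $|x^{\alpha-1}v|\le C\sqrt{x}$ from \eqref{eq:DA0} to conclude that both summands vanish as $x\to 0$ (and trivially as $x\to 1$).

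Second, I would establish the pointwise identity $L_sw=e^{s\phi}Lv$. Writing $v=we^{-s\phi}$ and differentiating gives
\[
e^{s\phi}v_t = w_t - s\phi_t w, \qquad e^{s\phi}(x^\alpha v_x)_x = (x^\alpha w_x)_x + s^2 x^\alpha\phi_x^2 w - 2s\,x^\alpha\phi_x w_x - s(x^\alpha\phi_x)_x w .
\]
Adding these and separating the terms that contain $w_t$ from those that do not reproduces exactly the decomposition \eqref{eq:Lpm}. That $L_s^+w,L_s^-w\in L^2(Q)$ then follows from Lemma~\ref{le:DA}: every coefficient appearing is a power of $x$ times a factor bounded in $t$ after absorption into $e^{s\phi}$, and the weighted integrals $\int_0^1 x^{2\alpha+2\beta-4}v^2$, $\int_0^1 x^{2\alpha+4\beta-4}v^2$ and $\int_0^1 x^{2\alpha+2\beta-2}v_x^2$ that arise are all finite by \eqref{eq:DA2} (applied with the parameter $2\beta$, respectively $4\beta$, in place of $\beta$, both positive).

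Third comes the heart of the matter, the identity \eqref{lemACG}. Writing $L_s^+w=A_1+A_2+A_3$ and $L_s^-w=B_1+B_2+B_3$ in the order of \eqref{eq:Lpm}, I would expand $\int_Q L_s^+w\,L_s^-w$ into nine integrals and integrate each by parts. The three products $\int_Q A_iB_1$ (with $B_1=w_t$) convert $w\,w_t$ and $w_x\,w_{tx}$ into time derivatives of squares; after integrating in $t$, the temporal boundary contributions vanish (because the energies $\int_0^1 x^\alpha w_x^2\,dx$ and $\int_0^1 w^2\,dx$ vanish as $t\to 0,T$), and one is left, in particular, with $\frac{s}{2}\int_Q\phi_{tt}w^2$ from $A_2B_1$. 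The six remaining products are integrated by parts in $x$; for instance $\int_Q A_1B_3=\int_Q (x^\alpha w_x)_x\bigl(-s(x^\alpha\phi_x)_xw\bigr)$ produces, after one integration by parts, the term $s\int_Q x^\alpha(x^\alpha\phi_x)_{xx}ww_x$ together with a $w_x^2$ contribution. Collecting all surviving interior terms and using $\phi=p(x)l(t)$ to combine the mixed contributions yields the five terms on the right-hand side of \eqref{lemACG}.

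The main obstacle will be controlling the spatial boundary terms generated at the degenerate endpoint $x=0$. Every integration by parts against $(x^\alpha w_x)_x$, $\phi_x$ or $\phi_{xx}$ creates boundary quantities such as $x^\alpha w_x\,(\cdots)$, $x^{2\alpha}\phi_x w_x^2$ and $x^\alpha\phi_x^2 w^2\big|_{x=0}$, in which $\phi_x=-\beta x^{\beta-1}l$ and $\phi_{xx}$ carry \emph{negative} powers of $x$. These are not killed by the homogeneous conditions \eqref{boundaryw}--\eqref{boundaryw1} alone; rather, one must insert the sharp rates $|x^\alpha w_x|\le C\sqrt{x}$ and $|x^{\alpha-1}w|\le C\sqrt{x}$ coming from \eqref{eq:DA1}--\eqref{eq:DA0} (applied to $w$, which satisfies the hypotheses of Lemma~\ref{le:DA}) to see that each boundary expression is $O(x^\gamma)$ with $\gamma>0$ and hence vanishes in the limit. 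The same estimates simultaneously guarantee absolute convergence of every interior integral. The delicate bookkeeping is to verify that the prescribed range $\beta\in(1-\alpha,1-\frac\alpha2)$ makes all the relevant exponents strictly positive; this is precisely what ties the admissible choice of $\beta$ to the validity of \eqref{lemACG}.
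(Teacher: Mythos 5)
Your proposal is correct and follows essentially the same route as the paper: conjugate, split $L_s=L_s^+ + L_s^-$, integrate the cross term by parts, and kill the spatial boundary terms at $x=0$ by applying the rates \eqref{eq:DA1}--\eqref{eq:DA0} of Lemma~\ref{le:DA} to $w$ (the paper merely outsources the nine-product expansion to \cite[Lemma 3.4]{Alabau} and lists the resulting boundary terms explicitly). One small caution: $e^{s\phi}$ does \emph{not} decay as $t\to 0,T$ uniformly near $x=0$ (there $\phi\to 0$ and $e^{s\phi}\to 1$), so the $L^2$ membership of terms like $s\phi_t v e^{s\phi}$ and $s^2x^\alpha\phi_x^2 v e^{s\phi}$ must be obtained, as you do elsewhere, by trading powers of $x^\beta l$ against the exponential and invoking the weighted bounds of Lemma~\ref{le:DA}, not from boundedness of the weight alone.
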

\begin{proof}
One easily checks that, for a.e. $t \in (0,T)$,
\[
x^\alpha w_x=s x^{\alpha}\phi_xve^{s\phi}+x^\alpha v_xe^{s\phi}\,.
\]
Note that, because of our choice \eqref{weightbeta}, $\phi_x=-\beta l x^{\beta-1}$, so that $x^\alpha \phi_x=- \beta l x^{\alpha+\beta-1}$. Then, the fact that $w\in L^2(0,T;D(A))\cap H^1(0,T;L^2(0,1))$, as well as \eqref{boundaryw} and \eqref{boundaryw1}, follows from Lemma~\ref{le:DA} and \eqref{boundaryv}. Similarly, one can show $L_s^+w, L_s^-w\in L^2(Q)$. As for  \eqref{lemACG}, integrating by parts as in \cite[Lemma 3.4]{Alabau} one obtains
\begin{multline*}
\int_QL_s^+w\, L_s^-w = \frac{s}{2}\int_Q \phi_{tt}w^2+s\int_Q x^\alpha(x^\alpha\phi_x)_{xx}ww_x+2s^2\int_Q x^\alpha \phi_x \phi_{tx}w^2 \\
+ s \int _Q (2x^{2\alpha}\phi_{xx}+\alpha x^{2\alpha-1}\phi_x)w_x^2+s^3\int_Q(2x^\alpha \phi_{xx}+\alpha x^{\alpha-1}\phi_x)x^\alpha \phi_x^2w^2
\\
+\int_0^T\Big[x^\alpha w_xw_t-s\phi_x(x^\alpha w_x)^2+s^2x^\alpha \phi_t\phi_xw^2-s^3x^{2\alpha}\phi_x^3w^2-sx^\alpha (x^\alpha \phi_x)_xww_x\Big]_{x=0}^{x=1}\,dt
\end{multline*} 
Recalling Lemma~\ref{le:DA} once again, and the boundary conditions \eqref{boundaryw} and \eqref{boundaryw1}, it is easy to see that the boundary terms  vanish in the above identity, which therefore reduces to \eqref{lemACG}.
\end{proof}

We can now proceed with the proof of  Theorem~\ref{theo1}. Since  $e^{s\phi}Lv=L_s^+w+L_s^-w$, identity \eqref{lemACG} yields
\begin{multline*}
\|e^{s\phi}Lv\|_{L^2(Q)}^2 \geq \frac{s}{2}\int_Q \phi_{tt}w^2+s\int_Q x^\alpha(x^\alpha\phi_x)_{xx}ww_x+2s^2\int_Q x^\alpha \phi_x \phi_{tx}w^2 \\
+ s \int _Q (2x^{2\alpha}\phi_{xx}+\alpha x^{2\alpha-1}\phi_x)w_x^2+s^3\int_Q(2x^\alpha \phi_{xx}+\alpha x^{\alpha-1}\phi_x)x^\alpha \phi_x^2w^2.
\end{multline*}
 Let us denote by ${\sum_{k=1}^{5}J_k}$ the right-hand side of the above estimate. We will now  use the properties  of the weight functions in \eqref{weightl}, \eqref{weightbeta} and \eqref{weightphi} to bound each  $J_k$. 

First of all, we have
\[
|J_1 |
=\Big|\frac{s}{2}\int_Q \phi_{tt}w^2\Big| \leq \frac{s}{2} \int_Q |
l^{\prime \prime}|w^2.
\]
\noindent Yet, one can easily check that there exists a constant $C=C(T)>0$ such that, for all $t\in (0,T)$, $|l^{\prime \prime}(t)|\leq C l^3(t)$. Then, there exists $C=C(T)>0$ such that
\begin{equation}
|J_1| \leq C s\int_Q l^3 w^2.
\label{estimJ1}
\end{equation}

Now, to estimate $J_2$ observe that, in view of \eqref{boundaryw}, we have
\[
J_2=\frac{s}{2}\int_Q x^\alpha(x^\alpha\phi_x)_{xx}\partial _x(w^2)=-\frac{s}{2}\int_Q (x^\alpha(x^\alpha\phi_x)_{xx})_xw^2\,.
\]
\noindent Moreover, for all $(x,t) \in (0,1)\times (0,T)$, $\phi_x(x,t)=-\beta l(t)x^{\beta-1}$. Then, $x^\alpha \phi_x(x,t)=-\beta l(t)x^{\alpha+\beta-1}$.  Therefore, for all $(x,t) \in (0,1)\times (0,T)$,
\[
x^\alpha(x^\alpha\phi_x)_{xx}=-\beta (\alpha+\beta-1)(\alpha+\beta-2)l(t)x^{2\alpha+\beta-3}.
\]
\noindent Eventually,
\begin{equation*}
(x^\alpha(x^\alpha\phi_x)_{xx})_x=-\beta (\alpha+\beta-1)(\alpha+\beta-2)(2\alpha+\beta-3)l(t)x^{2\alpha+\beta-4}\,.
%\label{calculationJ2}
\end{equation*}

\noindent Let us now show  that the product $\beta (\alpha+\beta-1)(\alpha+\beta-2)(2\alpha+\beta-3)$ is positive. First of all, since $1-\alpha<\beta$, we have $\alpha+\beta-1>0$. Since $\alpha<1$ and $\beta <1$, $\alpha+\beta-2<0$. Moreover,
\[
2\alpha+\beta-3<2\alpha+1-\frac{\alpha}{2}-3=\frac{3}{2}\alpha-2<0,
\]
\noindent since $\alpha<1$. Therefore, $\beta (\alpha+\beta-1)(\alpha+\beta-2)(2\alpha+\beta-3)>0$. Then, there exists $C=C(\alpha,\beta)>0$ such that
\begin{equation}
J_2\geq C(\alpha,\beta) s \int_Q lx^{2\alpha + \beta -4}w^2.
\label{estimJ2}
\end{equation}

Next, observe that
\[
J_3=2s^2 \int_Q x^\alpha(-\beta x^{\beta-1}l(t))(-\beta x^{\beta-1} l^{\prime}(t))w^2=2s^2 \int_Q l(t)l^{\prime}(t)\beta^2 x^{\alpha+2\beta-2}w^2.
\]
\noindent Also, $|l(t)l^{\prime}(t)|\leq C l^3(t)$ for all $t\in (0,T)$ and some constant $C=C(T)>0$. Then, 
\begin{equation}
|J_3| \leq C s^2 \int_Q l^3(t)x^{\alpha+2\beta-2}w^2.
\label{estimJ3}
\end{equation}

 Computing the derivatives in $J_4$, one has
\begin{multline*}
J_4=s\int_Q (-2\beta (\beta-1)l(t)x^{2\alpha+\beta-2}-\alpha\beta l(t) x^{\alpha+\alpha-1+\beta-1})w_x^2
\\
=s\int_Q l(t)\beta x^{2\alpha+\beta-2}(-2\beta+2-\alpha)w_x^2\,.
\end{multline*}
\noindent Yet, $\beta<1-\frac{\alpha}{2}$, so that $-2\beta-\alpha+2>0$. Then, for some $C=C(\alpha,\beta)>0$
\begin{equation}
J_4=C(\alpha,\beta) s \int_Q l(t)x^{2\alpha+\beta-2} w_x^2.
\label{estimJ4}
\end{equation}

 Finally, arguing in the same way for $J_5$ we have
$$
J_5  =s^3\int_Q (-2\beta (\beta-1)l(t)x^{\alpha+\beta-2}-\alpha\beta l(t)x^{\alpha-1+\beta-1})l^2(t)\beta^2 x^{\alpha+2\beta-2}w^2
$$
$$
= s^3 \int_Q \beta^3 l^3(t)(-2\beta+2-\alpha)
x^{2\alpha+3\beta-4}w^2.
$$
\noindent Since $-2\beta+2-\alpha>0$, there exists $C=C(\alpha,\beta)>0$ such that
\begin{equation}
J_5=C(\alpha,\beta) s^3 \int_Q l^3(t)x^{2\alpha+3\beta-4}w^2.
\label{estimJ5}
\end{equation}

\noindent Coming back to \eqref{lemACG}, and using \eqref{estimJ1}, \eqref{estimJ2}, \eqref{estimJ3}, \eqref{estimJ4} and \eqref{estimJ5}, one has 

\begin{multline*}
\|e^{s\phi}Lv\|_{L^2(Q)}^2 \geq -C s\int_Q l^3 w^2+C(\alpha,\beta) s \int_Q lx^{2\alpha +\beta-4}w^2-C s^2 \int_Q l^3(t)x^{\alpha+2\beta-2}w^2 \\
+C(\alpha,\beta) s \int_Q l(t)x^{2\alpha+\beta-2} w_x^2 +C(\alpha,\beta) s^3 \int_Q l^3(t)x^{2\alpha+3\beta-4}w^2.
\end{multline*}

\noindent So, we can immediately deduce that, for some constant $C=C(T,\alpha,\beta)>0$,
\begin{multline}
\int_Q\Big(s^3l^3(t)x^{2\alpha+3\beta-4}+sl(t)x^{2\alpha+\beta-4}\Big)w^2+\int_Q sl(t)x^{2\alpha+\beta-2}w_x^2 \\
\leq C\left(\|e^{s\phi}Lv\|_{L^2(Q)}^2+s\int_Ql^3(t)w^2+s^2\int_Q l^3(t)x^{\alpha+2\beta-2}w^2\right).
\label{estim2}
\end{multline}

\noindent Now, we are going to absorb the two rightmost terms of \eqref{estim2} by the left-hand side. First of all, we note that
\[
2\alpha+3\beta-4-(\alpha+2\beta-2)=\alpha+\beta-2<0.
\]
\noindent As a consequence, since $0<x<1$,
\[
\int_Q l^3(t)x^{\alpha+2\beta-2}w^2\leq \int_Q l^3(t)x^{2\alpha+3\beta-4}w^2.
\]
\noindent Moreover, we have already mentioned that $2\alpha+3\beta-4<0$, so that for all $x \in (0,1)$, $1\leq x^{2\alpha+3\beta-4}$ and 
\[
\int_Ql^3(t)w^2 \leq \int_Q l^3(t)x^{2\alpha+3\beta-4}w^2.
\]
\noindent Then, \eqref{estim2} becomes
\begin{multline}
\int_Q(s^3l^3(t)x^{2\alpha+3\beta-4}+sl(t)x^{2\alpha+\beta-4})w^2+\int_Q sl(t)x^{2\alpha+\beta-2}w_x^2 \\
\leq C\left(\|e^{s\phi}Lv\|_{L^2(Q)}^2+(s+s^2)\int_Q l^3(t)x^{2\alpha+3\beta-4}w^2\right),
\label{estim3}
\end{multline}
\noindent with $C=C(T,\alpha,\beta)>0$. Now, there exists $s_0=s_0(T,\alpha,\beta)>0$ such that, for all $s\geq s_0$, $C(s+s^2)\leq s^3/2$. Therefore, for all $s\geq s_0$ and some $C=C(T,\alpha,\beta)>0$,
\begin{multline}
\int_Q(s^3l^3(t)x^{2\alpha+3\beta-4}+sl(t)x^{2\alpha+\beta-4})w^2+\int_Q sl(t)x^{2\alpha+\beta-2}w_x^2
\\
\leq C \|e^{s\phi}Lv\|_{L^2(Q)}^2.
\label{estim4}
\end{multline}
\noindent Eventually,  recalling that $w=ve^{s\phi}$, we have
\begin{multline}
\int_Q(s^3l^3(t)x^{2\alpha+3\beta-4}+sl(t)x^{2\alpha+\beta-4})v^2e^{2s\phi}+\int_Q sl(t)x^{2\alpha+\beta-2}w_x^2
\\
\leq C \|e^{s\phi}Lv\|_{L^2(Q)}^2.
\label{estim5}
\end{multline}
\noindent Moreover, $v_xe^{s\phi}=w_x-s\phi_xve^{s\phi}$. Therefore,
\[
\int_Q sl(t)x^{2\alpha+\beta-2}v_x^2e^{2s\phi} \leq 2 \int_Q sl(t)x^{2\alpha+\beta-2}w_x^2+ 2s^3\beta^2 \int_Q l^3x^{2\beta-2+2\alpha+\beta-2}v^2e^{2s\phi}\,.
\]
\noindent Thus,
\[
\int_Q sl(t)x^{2\alpha+\beta-2}v_x^2e^{2s\phi} \leq 2 \int_Q sl(t)x^{2\alpha+\beta-2}w_x^2+ 2s^3\beta^2 \int_Q l^3x^{2\alpha+3\beta-4}v^2e^{2s\phi}\,.
\]
\noindent The proof of Theorem \ref{theo1} is then completed thanks to \eqref{estim5}. 

\section{A unique continuation result}
\label{se:uc}
In this section, our goal is to show the following unique continuation property for the `adjoint operator'
\begin{equation*}
Lv=v_t+(x^\alpha v_x)_x\quad\text{in}\quad Q\,.
\end{equation*}
\begin{theorem}
Let $v \in L^2(0,T;D(A)) \cap H^1(0,T,L^2(0,1))$ and suppose that, for a.e. $t \in (0,T)$,
\begin{equation}
v(0,t)=(x^\alpha v_x)(0,t)=0.
\label{hypUC}
\end{equation}
\noindent If $Lv\equiv 0$ in $Q$, then $v\equiv 0$ in $Q$.
\label{theo2}
\end{theorem}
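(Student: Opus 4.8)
The plan is to reduce the statement to the Carleman estimate of Corollary~\ref{cor1} by a localization (cutoff) argument, exploiting the fact that the weight $e^{2s\phi}=e^{-2sx^\beta l(t)}$ is large near the degenerate boundary $x=0$ and exponentially small near $x=1$. The obstruction to applying \eqref{Carleman2} directly is that it requires all four boundary conditions $v(0,t)=(x^\alpha v_x)(0,t)=v(1,t)=(x^\alpha v_x)(1,t)=0$, whereas hypothesis \eqref{hypUC} provides only the two conditions at $x=0$; nothing is known at $x=1$.

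First I would fix $0<a<b<1$ and a cutoff $\eta\in C^\infty([0,1])$ with $\eta\equiv1$ on $[0,a]$ and $\eta\equiv0$ on $[b,1]$, and set $w:=\eta v$. Using Lemma~\ref{le:DA} together with \eqref{hypUC}, one checks that $w\in L^2(0,T;D(A))\cap H^1(0,T;L^2(0,1))$ and that $w$ satisfies the four boundary conditions required by Corollary~\ref{cor1}: those at $x=0$ because $\eta\equiv1$ there (so $w=v$ and $x^\alpha w_x=x^\alpha v_x$ near $0$), and those at $x=1$ because $\eta$ and $\eta_x$ vanish there. Since $Lv\equiv0$, a direct computation gives
\[
Lw=\eta\,Lv+2x^\alpha\eta_x v_x+(x^\alpha\eta_x)_x v=2x^\alpha\eta_x v_x+(x^\alpha\eta_x)_x v,
\]
so that $Lw$ is supported in the transition region $(a,b)\times(0,T)$, where the operator is uniformly parabolic and $|Lw|^2\le C(v^2+v_x^2)$ with bounded coefficients.

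Next I would apply \eqref{Carleman2} to $w$ and exploit the monotonicity of the weight in $x$. Discarding the gradient term on the left and restricting the remaining integral to $(0,a')\times(0,T)$ for a fixed $a'\in(0,a)$ — where $w=v$ — yields a lower bound in which $e^{2s\phi}\ge e^{-2s(a')^\beta l(t)}$, while on the right-hand side the support of $Lw$ forces $e^{2s\phi}\le e^{-2sa^\beta l(t)}$. Setting $\gamma:=a^\beta-(a')^\beta>0$ and using $l(t)\ge 4/T^2$, the ratio of the two weights is controlled by $e^{-2s\gamma l(t)}\le e^{-8s\gamma/T^2}$. Restricting $t$ to a compact subinterval of $(0,T)$ on which $l\le L_*$, I would thus obtain an inequality of the form $\int V\le C\,s^{-3}e^{-8s\gamma/T^2}e^{2s(a')^\beta L_*}$ for the localized energy $V$ of $v$; choosing $a'$ small enough that $a^\beta(4/T^2)>(a')^\beta L_*$ and letting $s\to\infty$ forces $v\equiv0$ on $(0,a')\times(0,T)$.

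Finally, once $v$ vanishes on the space-time open set $(0,a')\times(0,T)$, on the strip $(a'/2,1)\times(0,T)$ the equation $Lv=0$ is uniformly parabolic (no degeneracy for $x\ge a'/2$) and $v$ vanishes on the open subset $(a'/2,a')\times(0,T)$; by the classical unique continuation property for non-degenerate parabolic equations, $v\equiv0$ on $(a'/2,1)\times(0,T)$, whence $v\equiv0$ in $Q$. I expect the main obstacle to be the quantitative balancing in the third step: because the weight depends on $t$ through $l$, one must choose the cutoff parameter $a'$ and the time window so that the exponential gain coming from the weight mismatch across $(a',a)$ dominates \emph{uniformly} the loss incurred in bounding the left-hand weight from below, and only then does the limit $s\to\infty$ annihilate the localized energy of $v$.
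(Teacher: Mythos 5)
Your proposal follows essentially the same route as the paper: cut off $v$ near $x=0$ with $\eta\equiv 1$ on $[0,a]$ and $\eta\equiv 0$ on $[b,1]$, apply Corollary~\ref{cor1} to $\eta v$, observe that $L(\eta v)$ is supported in the transition zone where the weight $e^{2s\phi}$ is exponentially smaller than it is on $(0,a')$, and let $s\to\infty$. The paper does exactly this, with the cutoff parametrized by level sets of $p$ (namely $\chi\equiv 1$ on $\{p>-\delta\}$, the left-hand side restricted to $\{p>-\delta/3\}\times(T/4,3T/4)$, and the balance $-\delta+\tfrac43\cdot\tfrac{\delta}{3}=-\tfrac{5\delta}{9}<0$).

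One imprecision to fix in your third step: since the lower bound $e^{2s\phi}\ge e^{-2s(a')^\beta L_*}$ is only available after restricting $t$ to a compact subinterval $I\subset(0,T)$ with $\max_I l=L_*$, the limit $s\to\infty$ yields $v\equiv 0$ on $(0,a')\times I$ only, with $a'=a'(I)\to 0$ as $I\uparrow(0,T)$ --- not on $(0,a')\times(0,T)$ as you state, so your final ``whence $v\equiv 0$ in $Q$'' needs one more ingredient. Either run the classical-UCP propagation separately for each such $I$ (getting $v\equiv 0$ on $(0,1)\times I$ and then exhausting $(0,T)$), or do as the paper does: fix $I=(T/4,3T/4)$, propagate to $(0,1)\times(T/4,3T/4)$ by classical unique continuation, and then invoke the analyticity of the semigroup $e^{tA}$ (backward uniqueness) to conclude $v\equiv 0$ on all of $Q$.
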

\begin{proof}
Let $0<\delta<1$ and $\Omega_\delta:=\{x\in (0,1): p(x)>-\delta\}$. The first step of the proof consists in proving that $v\equiv 0$ in $\Omega_\delta \times (\frac{T}{4},\frac{3T}{4})$. First of all, let us note that
\begin{equation}
x\in \Omega_\delta \text{ if and only if } x<\delta^{1/\beta}.
\label{omegadelta}
\end{equation}

\noindent Now, let us take $ \eta\in(\delta,1)$ and $\chi \in C^\infty(\mathbb R)$ such that $0\leq \chi\leq 1$ and 
\[
\chi(x)= \left\{
\begin{array}{ll}
1 & \quad x \in \Omega_\delta \\
0 & \quad x \notin \Omega_{\eta}
\end{array}
\right . .
\]
\noindent From the definition of $\chi$ above and  \eqref{omegadelta}, we deduce that
\begin{equation}
\forall x \in [0,\delta^{1/\beta}], \quad \chi(x)=1,
\label{chi0}
\end{equation}
\noindent and
\begin{equation}
\forall x \in [\eta^{1/\beta},1], \quad \chi(x)=0.
\label{chi1}
\end{equation}
\noindent Define $u \in L^2(0,T;D(A)) \cap H^1(0,T;L^2(0,1))$ by $u:=\chi v$, and observe that
\[
Lu=\partial_t u+(x^\alpha u_x)_x=\chi v_t+(x^\alpha(\chi v)_x)_x.
\]
\noindent Hence, after some standard computations, we get
\begin{equation}
Lu=\chi^{\prime\prime}x^\alpha v+\chi^\prime \alpha x ^{\alpha-1} v +2\chi^\prime x ^{\alpha} v_x.
\label{Lu}
\end{equation}

\noindent 
In order to appeal to Corollary~\ref{cor1}, we have to check that $u$ satisfies the required boundary conditions. First of all, for a.e. $t \in (0,T)$, $u(0,t)=\chi(0)v(0,t)=0$ by \eqref{hypUC}, and $u(1,t)=\chi(1)v(1,t)=0$ by\eqref{chi1}. Moreover, $u_x=\chi_xv+\chi v_x$, so that $x^\alpha u_x=x^\alpha \chi_x v+\chi x^\alpha v_x$. Using assumption \eqref{hypUC} and property \eqref{chi0} for $\chi$, one gets that $(x^\alpha u_x)(0,t)=0$ for a.e. $t \in (0,T)$. Also, using property \eqref{chi1} for $\chi$, one has $(x^\alpha u_x)(1,t)=0$ for a.e. $t \in (0,T)$.
Thus, we are in a position to  apply Corollary~\ref{cor1} to $u$.  
We obtain
\[
\int_Q s^3l^3u^2e^{2s\phi}+\int_Q slx^{2\alpha+\beta-2}u_x^{2}e^{2s\phi} \leq 
C \int_Q |Lu|^2e^{2s\phi}.
\]
\noindent Replacing $Lu$ by the expression in \eqref{Lu}, we immediately deduce that there exists $C=C(T,\alpha,\beta)>0$ such that
\begin{multline}
\int_Q s^3l^3u^2e^{2s\phi}+\int_Q slx^{2\alpha+\beta-2}u_x^{2}e^{2s\phi}
\\
\leq C\left(\int_Q (|\chi^{\prime\prime}|^2x^{2\alpha}
+|\chi^\prime|^2\alpha^2x^{2\alpha-2})v^2e^{2s\phi} 
+ \int_Q |\chi^\prime|^2x^{2\alpha}v_x^2e^{2s\phi}\right).
\label{estimUC}
\end{multline}
\noindent First of all, using \eqref{chi0} and \eqref{chi1},
\begin{equation}
\int_Q |\chi^{\prime\prime}|^2x^{2\alpha}v^2e^{2s\phi} \leq \int_{\delta^{1/\beta}}^{\eta^{1/\beta}}\int_{0}^{T} |\chi^{\prime\prime}|^2 v^2e^{2s\phi}\,.
\label{estimUC1}
\end{equation}
\noindent 
As for the second term, we have
\[
\int_Q |\chi^\prime|^2\alpha^2x^{2\alpha-2}v^2e^{2s\phi}=\int_{\delta^{1/\beta}}^{\eta^{1/\beta}}\int_{0}^{T}|\chi^\prime|^2\alpha^2x^{2\alpha-2}v^2e^{2s\phi}
\]
\noindent because of \eqref{chi1}. Then,
\begin{equation}
\int_Q |\chi^\prime|^2\alpha^2x^{2\alpha-2}v^2e^{2s\phi} \leq \int_{\delta^{1/\beta}}^{\eta^{1/\beta}}\int_{0}^{T} \eta^\frac{2\alpha-2}{\beta}\alpha^2 |\chi^\prime|^2v^2e^{2s\phi}.
\label{estimUC2}
\end{equation}
\noindent 
Eventually, the last term satisfies the bound
\begin{equation}
\int_Q |\chi^\prime|^2x^{2\alpha}v_x^2e^{2s\phi} \leq \int_{\delta^{1/\beta}}^{\eta^{1/\beta}}\int_{0}^{T} |\chi^\prime|^2x^\alpha v_x^2e^{2s\phi}
\label{estimUC3}
\end{equation}
\noindent 
since $0\leq x\leq 1$. Coming back to \eqref{estimUC} and using \eqref{estimUC1}, \eqref{estimUC2} and \eqref{estimUC3}, we conclude that there exists a constant $C=C(T,\alpha,\beta,\delta,\eta)>0$ such that

\[
\int_Q s^3l^3u^2e^{2s\phi}+\int_Q slx^{2\alpha+\beta-2}u_x^{2}e^{2s\phi} \leq 
C \int_{\delta^{1/\beta}}^{\eta^{1/\beta}}\int_{0}^{T} (|\chi^{\prime\prime}|^2+|\chi^\prime|^2)(v^2+x^\alpha v_x^2)e^{2s\phi}.
\]
\noindent 
Therefore, for some constant $C=C(T,\alpha,\beta,\delta,\eta)>0$,
\[
\int_Q s^3l^3u^2e^{2s\phi}+\int_Q slx^{2\alpha+\beta-2}u_x^{2}e^{2s\phi} \leq 
C \int_{\delta^{1/\beta}}^{\eta^{1/\beta}}\int_{0}^{T} (v^2+x^\alpha v_x^2)e^{2s\phi}\,.
\]
\noindent 
Hence,
\begin{equation}
\int_Q s^3l^3u^2e^{2s\phi} \leq C \int_{\delta^{1/\beta}}^{\eta^{1/\beta}}\int_{0}^{T} (v^2+x^\alpha v_x^2)e^{2s\phi}.
\label{estimUCend}
\end{equation}
\noindent 
Our goal is to estimate the weight $e^{2s\phi}$ from above 
in order to simplify the right-hand side of \eqref{estimUCend}. 
First note that, for all $t\in (0,T)$, 
$l(t)\geq l(\frac{T}{2})=\frac{4}{T^2}$. Also, 
since $p$ is negative and decreasing, for 
all $(x,t) \in (\delta^{1/\beta},\eta^{1/\beta})\times (0,T)$, 
\[
2sp(x)l(t)\leq \frac{8sp(x)}{T^2}\leq \frac{8sp(\delta^{1/\beta})}{T^2}.
\]
\noindent Then, 
\begin{equation}
\int_{\delta^{1/\beta}}^{\eta^{1/\beta}}\int_{0}^{T} (v^2+x^\alpha v_x^2)e^{2s\phi} \leq 
\exp\Big(\frac{8sp(\delta^{1/\beta})}{T^2}\Big)
\|v\|_{L^2(0,T;H^{1}_{a}(0,1))}^2.
\label{estimUCend1}
\end{equation}
\noindent Now, we want to estimate $e^{2s\phi}$ from below, so that we may simplify the left-hand side of \eqref{estimUCend}. We set 
\[
Q_0:=\Big\{(x,t)\in Q~:~p(x)>-\frac{\delta}{3} ,\quad \frac{T}{4}<t<\frac{3T}{4}\Big\}.
\]
\noindent First, since $l(t)\geq \frac{4}{T^2}$ for all $t\in (0,T)$, we have
\[
\int_Q s^3l^3u^2e^{2s\phi} \geq \int_Q s^3\Big(\frac{4}{T^2}\Big)^3u^2e^{2s\phi} \geq \int_{Q_{0}} s^3\Big(\frac{4}{T^2}\Big)^3u^2e^{2s\phi}\,.
\]
\noindent Moreover, $l(t)\leq \frac{16}{3T^2}$ for all $\frac{T}{4}<t<\frac{3T}{4}$. So,  for all $(x,t)\in Q_0$ one has 
\[
2sp(x)l(t)\geq s\frac{32}{3T^2}p(x) \geq \frac{4}{3}\frac{8sp((\frac{\delta}{3})^{1/\beta})}{T^2}.
\]
\noindent 
Consequently,
\[
\begin{split}
\int_{Q_{0}} s^3\Big(\frac{4}{T^2}\Big)^3u^2e^{2s\phi} & \geq s^3 
\exp\Big(\frac{4}{3}\frac{8sp((\frac{\delta}{3})^{1/\beta})}{T^2}\Big)
\int_{Q_{0}}\Big(\frac{4}{T^2}\Big)^3u^2, \\
& = s^3 
\exp\Big(\frac{4}{3}\frac{8sp((\frac{\delta}{3})^{1/\beta})}{T^2}\Big)
\int_{Q_{0}}\Big(\frac{4}{T^2}\Big)^3\chi^2v^2.
\end{split}
\]
\noindent Note that $p(x)>-\frac{\delta}{3} \Longleftrightarrow x \in (0,(\frac{\delta}{3})^{1/\beta})$. So, on account of \eqref{chi0},
\[
s^3 \exp\Big(\frac{4}{3}\frac{8sp((\frac{\delta}{3})^{1/\beta})}{T^2}\Big) \int_{Q_{0}}\Big(\frac{4}{T^2}\Big)^3\chi^2v^2=s^3 \exp\Big(\frac{4}{3}\frac{8sp((\frac{\delta}{3})^{1/\beta})}{T^2}\Big) \int_{Q_{0}}\Big(\frac{4}{T^2}\Big)^3v^2.
\]
\noindent Finally, 
\begin{equation}
\int_Q s^3l^3u^2e^{2s\phi} \geq s^3 \exp\Big(\frac{4}{3}\frac{8sp((\frac{\delta}{3})^{1/\beta})}{T^2}\Big) \int_{Q_{0}}\Big(\frac{4}{T^2}\Big)^3v^2.
\label{estimUCend2}
\end{equation}
\noindent 
Coming back to \eqref{estimUCend}, and using \eqref{estimUCend1} and \eqref{estimUCend2} we have
\begin{multline*}
s^3\Big(\frac{4}{T^2}\Big)^3\|v\|_{L^2(Q_0)}^2\exp\Big(\frac{4}{3}\frac{8sp((\frac{\delta}{3})^{1/\beta})}{T^2}\Big)
\\
 \leq C(T,\alpha,\beta,\delta) 
\exp\Big(\frac{8sp((\frac{\delta}{3})^{1/\beta})}{T^2}\Big)
{T^2}\|v\|_{L^2(0,T;H^{1}_{\alpha}(0,1))}^2\,,
\end{multline*}
\noindent 
from which we immediately deduce that
\[
\|v\|_{L^2(Q_0)}^2 \leq C(T,\alpha,\beta,\delta) \|v\|_{L^2(0,T;H^{1}_{\alpha}(0,1))}^2 \frac{1}{s^3} 
\exp\Big(
\frac{8s}{T^2}\Big[p(\delta^{1/\beta})-\frac{4}{3}p\big((\delta/3\big)^{1/\beta})\Big]
\Big)\,.
\]
\noindent 
Now, $p(\delta^{1/\beta})
-\frac{4}{3}p((\delta/3)^{1/\beta})
=-\delta+\frac{4}{3}\frac{\delta}{3}=-\frac{5\delta}{9}$. 
Passing to the limit when $s\to\infty$, 
we have that $\|v\|_{L^2(Q_0)}^2=0$. In conclusion, 
\[
v \equiv 0 \quad \text{ in }\quad
\Big(0,\Big(\frac{\delta}{3}\Big)^{1/\beta}\Big) \times \Big(\frac{T}{4},\frac{3T}{4}\Big).
\] 

To complete the proof, observe that the classical unique continuation for parabolic equations
implies that $v\equiv 0$ in $(0,1)\times (T/4,3T/4)$. Equivalently, $e^{(T-t)A}v(T) = 0$ for all $t\in(T/4,3T/4)$, where $e^{tA}$ is the semigroup generated by $A$. Since $e^{tA}$ is analytic for $t>0$, this implies that $v\equiv 0$ in $(0,1)\times (0,T)$.
%To complete the proof, observe that one can repeat the above argument with the weight 
%$\phi(x,t) = \frac{p(x)}{(t-t_0)(t_1-t)}$,
%for any $t_0, t_1 \in (0,T)$ such that $t_0 < t_1$, 
%  choosing
%$\delta_0 > 0$ such that $v = 0$ in $(0, \delta_0) \times
%(\epsilon_1,T-\epsilon_1)$ for any $\epsilon_1>0$.
%Hence,  $v = 0$
%in $(0,\delta_0) \times (0,T)$. 
%Since the problem is uniformly parabolic for $\delta_1<x<1$, combining
%the above result with classical unique 
%continuation for parabolic equations yields the conclusion.
\end{proof}

\section{From unique continuation to approximate controllability}
\label{se:ac}
Let $0<\alpha <1$ and fix $T>0$. We are interested in the following initial-boundary value problem
\begin{equation}
\left\{ 
\begin{array}{lc}
u_{t}-(x^\alpha u_{x})_{x}=0 &\quad \left( x,t\right) \in Q= \left(0,1\right) \times \left(0,T\right) \\ 
u\left( 0,t\right) =g(t) & \quad t\in \left( 0,T\right) \\ 
u\left(1,t\right)=0 &\quad t\in \left(0,T\right) \\ 
u\left( x,0\right) =u_{0}(x) & \quad x\in \left( 0,1\right).
\end{array}
\right.  
\label{Pbm1}
\end{equation}
We aim at proving approximate controllability at time $T$ for the above equation, which amounts to showing that for any final state $u_T$ and any arbitrarily small neighbourhood $\mathcal V$ of $u_T$, there exists a control $g$ driving the solution of \eqref{Pbm1} to $\mathcal V$ at time T. 

Boundary control problems can be recast in abstract form in a standard way, see, e.g., \cite{BPDM}. Here, we follow a simpler method working directly on the parabolic  problem, where the boundary control is reduced to a suitable forcing term.
We begin by discussing the existence and uniqueness of solutions for \eqref{Pbm1}.

\subsection{Well-posedness of \eqref{Pbm1}}

\begin{theorem}
For all $u_0\in H_{\alpha,0}^{1}\left( 0,1\right)$ and all $g \in H^{1}_{0}(0,T)$, problem \eqref{Pbm1} has a unique mild solution $u \in L^2(0,T;H_{\alpha}^{1}\left( 0,1\right) \cap C([0,1]; L^2(0,1))$. Moreover,
\begin{equation}
\sup_{t\in [0,T]} \left\|u(t)\right\|_{L^2(0,1)}^2+\left\|x^{\alpha/2}u_x\right\|_{L^2(0,T;L^2(0,1))}^2 \leq C(T)( \left\|g\right\|_{H^{1}_{0}(0,T)}^2+\left\|u_0\right\|_{L^2(0,1)}^2).
\label{estimsol}
\end{equation}
\noindent Furthermore, $(x^\alpha u_x)_x \in L^2(0,T;L^2(0,1))$ and   \eqref{Pbm1} is satisfied almost everywhere.
\label{theo3}
\end{theorem}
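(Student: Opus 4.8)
The plan is to remove the inhomogeneous boundary datum by an explicit lifting and then recast the problem as an abstract Cauchy problem for the generator $A$. The natural choice of lifting is precisely the function $f(x)=1-x^{1-\alpha}$ from Example~\ref{lem1}, which satisfies $(x^\alpha f_x)_x=0$, $f(0)=1$, $f(1)=0$, and belongs to $H^{1}_{\alpha}(0,1)$. Setting $w:=u-g(t)f(x)$, the boundary conditions become homogeneous, since $w(0,t)=u(0,t)-g(t)f(0)=g(t)-g(t)=0$ and $w(1,t)=0$. Because $(x^\alpha f_x)_x\equiv 0$, a direct computation gives $w_t-(x^\alpha w_x)_x=-g'(t)f(x)=:F(x,t)$, and, as $g\in H^{1}_{0}(0,T)$ forces $g(0)=0$, the initial datum is simply $w(\cdot,0)=u_0\in H^{1}_{\alpha,0}(0,1)$. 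Note that $0\le f\le 1$ gives $f\in L^2(0,1)$ and $g'\in L^2(0,T)$, so that $F\in L^2(0,T;L^2(0,1))$ with $\|F\|_{L^2(Q)}\le C\|g\|_{H^{1}_{0}(0,T)}$.

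Next I would read this as the abstract problem $w'=Aw+F$, $w(0)=u_0$, and invoke the analytic-semigroup property of $A$ recalled above. Duhamel's formula $w(t)=e^{tA}u_0+\int_0^t e^{(t-s)A}F(s)\,ds$ yields a unique mild solution $w\in C([0,T];L^2(0,1))$, and uniqueness of $u$ then follows by linearity. To produce the a priori bound I would run the standard energy estimate: multiply the equation for $w$ by $w$, integrate over $(0,1)$, and integrate $-(x^\alpha w_x)_x$ by parts. The boundary term at $x=1$ vanishes because $w(1,t)=0$, while the term at $x=0$ vanishes by Lemma~\ref{le:DA}, which ensures $x^\alpha w_x\, w\to 0$ as $x\to 0$. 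Bounding $\int_0^1 Fw$ by Young's inequality and applying Gronwall's lemma gives
\[
\sup_{t\in[0,T]}\|w(t)\|_{L^2(0,1)}^2+\int_0^T\!\!\int_0^1 x^\alpha w_x^2\,dx\,dt\le C(T)\big(\|u_0\|_{L^2(0,1)}^2+\|F\|_{L^2(Q)}^2\big),
\]
so that $w\in L^2(0,T;H^{1}_{\alpha}(0,1))\cap C([0,T];L^2(0,1))$.

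For the claim that $(x^\alpha u_x)_x\in L^2(Q)$ and the equation holds almost everywhere, I would appeal to $L^2$-maximal regularity for the analytic semigroup generated by $A$: since $u_0\in H^{1}_{\alpha,0}(0,1)=D((-A)^{1/2})$ and $F\in L^2(0,T;L^2(0,1))$, the mild solution is in fact strong, $w\in L^2(0,T;D(A))\cap H^1(0,T;L^2(0,1))$; hence $w_t$ and $(x^\alpha w_x)_x=w_t-F$ belong to $L^2(Q)$. Finally I would translate back through $u=w+g f$: since $(x^\alpha f_x)_x=0$ we get $(x^\alpha u_x)_x=(x^\alpha w_x)_x\in L^2(Q)$ with the PDE satisfied a.e., and $u\in L^2(0,T;H^{1}_{\alpha}(0,1))\cap C([0,T];L^2(0,1))$ because $f\in H^{1}_{\alpha}(0,1)$ and $g\in H^{1}_{0}(0,T)\subset C([0,T])$. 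The bound \eqref{estimsol} then follows from the energy estimate for $w$ together with the one-dimensional embedding $\|g\|_{L^\infty(0,T)}\le C\|g\|_{H^{1}_{0}(0,T)}$ and the facts that $f\in L^2(0,1)$ and $x^{\alpha/2}f_x=-(1-\alpha)x^{-\alpha/2}\in L^2(0,1)$ for $\alpha<1$.

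The step I expect to be most delicate is the identification of the trace space $D((-A)^{1/2})$ with $H^{1}_{\alpha,0}(0,1)$, which is what allows maximal regularity to be applied with data $u_0$ merely in $H^{1}_{\alpha,0}(0,1)$ rather than in $D(A)$, and, relatedly, the justification that all boundary contributions at the degenerate endpoint $x=0$ genuinely vanish. This is exactly where the decay estimates of Lemma~\ref{le:DA} are indispensable and where the degeneracy, in contrast with the uniformly parabolic case, must be handled with care.
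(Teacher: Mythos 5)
Your proposal is correct and follows essentially the same route as the paper: the lifting $w=u-g(t)(1-x^{1-\alpha})$ is exactly the paper's decomposition $u=y+(1-x^{1-\alpha})g(t)$, reducing to the same homogeneous auxiliary problem with forcing $-(1-x^{1-\alpha})g'(t)$, solved via the analytic semigroup generated by $A$ and the same energy estimate. The only cosmetic difference is that you justify the regularity $w\in L^2(0,T;D(A))\cap H^1(0,T;L^2(0,1))$ by invoking maximal regularity with $D((-A)^{1/2})=H^1_{\alpha,0}(0,1)$ explicitly, where the paper simply cites the literature for the well-posedness of the auxiliary problem.
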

\begin{proof}
 Let $u_0\in H_{\alpha,0}^{1}\left( 0,1\right)$ and $g \in H^{1}_{0}(0,T)$. Let us introduce the initial-boundary  value problem with homogeneous boundary conditions
\begin{equation}
\left\{ 
\begin{array}{lc}
y_{t}-(x^\alpha y_{x})_{x}=-(1-x^{1-\alpha})g_t &\quad \left( x,t\right) \in  Q\\ 
y\left( 0,t\right) =0 & \quad t\in \left( 0,T\right) \\
y\left( 1,t\right)=0 &\quad t\in \left(0,T\right) \\  
y\left( x,0\right) =u_{0}(x) & \quad x\in \left( 0,1\right).
\end{array}
\right.  
\label{Pbm2}
\end{equation}

\noindent Let us first prove the existence of a solution of \eqref{Pbm1}. Using the fact that $A$ is the infinitesimal generator of an analytic semigroup, we know that problem \eqref{Pbm2} has a unique solution $y \in L^2(0,T;D(A)) \cap H^1(0,T;L^2(0,1))$ (see for instance \cite{Cannarsa1,Cannarsa5}). Moreover, multiplying the first equation of \eqref{Pbm2} by $y$ and integrating over $Q$,
\begin{equation}
\sup_{t\in [0,T]} \left\|y(t)\right\|_{L^2(0,1)}^2+\left\|x^{\alpha/2}y_x\right\|_{L^2(0,T;L^2(0,1))}^2 \leq C(T,\alpha)( \left\|g\right\|_{H^{1}_{0}(0,T)}^2+\left\|u_0\right\|_{L^2(0,1)}^2).
\label{estimy}
\end{equation}
\noindent Set, for a.e. $(x,t) \in Q$, 
\begin{equation}
u(x,t):=y(x,t)+(1-x^{1-\alpha})g(t).
\label{expressionu}
\end{equation}

\noindent Then, $u \in H^1(0,T;L^2(0,1))\cap L^2(0,T;H_{\alpha}^{1}\left( 0,1\right))$ and, as we observed in Example~\ref{lem1}, 
$(x^\alpha u_x)_x=(x^\alpha y_x)_x \in L^2(0,T;L^2(0,1))$. Moreover,
\begin{multline*}
u_t(x,t)=y_t(x,t)+(1-x^{1-\alpha})g_t(t)
\\
=(x^\alpha y_x)_x(x,t)-(1-x^{1-\alpha})g_t(t)+(1-x^{1-\alpha})g_t(t)
\\=(x^\alpha y_x)_x(x,t) =(x^\alpha u_x)_x(x,t)\,.
\end{multline*}
\noindent  for a.e. $(x,t) \in Q$. Since $u \in L^2(0,T;H_{\alpha}^{1}\left( 0,1\right))$, for a.e. $t \in (0,T)$, $u(0,t)$ and $u(1,t)$ exist. Therefore, using \eqref{expressionu}, $u(0,t)=g(t)$ and $u(1,t)=0$. Also, for a.e. $x \in (0,1)$, $u(x,0)=y(x,0)=u_0(x)$ since $g \in H^{1}_{0}(0,T)$. Consequently, $u$ is a mild solution of \eqref{Pbm1} satisfying $(x^\alpha u_x)_x\in L^2(0,T;L^2(0,1))$ and $u \in H^1(0,T;L^2(0,1))$. Finally,  estimate \eqref{estimsol} follows from \eqref{estimy} and \eqref{expressionu}.

Next, let us prove uniqueness. Let $u_1$ and $u_2$ be two solutions of \eqref{Pbm1}. Then, the difference $w:=u_1-u_2$ is a solution of \eqref{Pbm2}, with $g\equiv 0$ and $u_0\equiv 0$. Because of the uniqueness property of problem \eqref{Pbm2}, $w \equiv 0$.
\end{proof}

\subsection{Approximate controllability}

Our goal is now to show the following theorem.

\begin{theorem}
Let $u_0 \in H_{\alpha,0}^{1}\left( 0,1\right)$. For all $u_T \in L^2(0,1)$ and all $\epsilon >0$ there exists $g \in H^{1}_{0}(0,T)$ such that the solution $u_g$ of problem \eqref{Pbm1} satisfies 
\[
\left\|u_g(T)-u_T\right\|_{L^2(0,1)} \leq \epsilon\,.
\]
\label{theo2}
\end{theorem}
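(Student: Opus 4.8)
The plan is to prove approximate controllability via the classical duality argument that converts it into the unique continuation statement already established in Theorem~\ref{theo2} (the one in section~\ref{se:uc}). Approximate controllability at time $T$ is equivalent to the density in $L^2(0,1)$ of the reachable set $\{u_g(T):g\in H^1_0(0,T)\}$. By linearity, it suffices to treat the case $u_0\equiv 0$, since $u_g=u_0^{\text{free}}+u_g^{\text{forced}}$ splits into the (fixed) free evolution of the initial datum and the part driven by the control; density of the reachable set is unaffected by the fixed translation $u_0^{\text{free}}(T)$. Thus I would first reduce to showing that $\mathcal R:=\{u_g(T):g\in H^1_0(0,T)\}$ is dense in $L^2(0,1)$.

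Next I would set up the duality. The reachable set $\mathcal R$ is a linear subspace, so by the Hahn--Banach theorem its density is equivalent to the following implication: if $z\in L^2(0,1)$ satisfies $\langle u_g(T),z\rangle_{L^2(0,1)}=0$ for every admissible control $g$, then $z=0$. Given such a $z$, I would introduce the adjoint (backward) problem: let $v$ solve $v_t+(x^\alpha v_x)_x=0$ in $Q$ with the conditions $v(1,t)=0$, the natural degenerate boundary behaviour $(x^\alpha v_x)(0,t)=0$, and terminal datum $v(x,T)=z(x)$. This is exactly the operator $L$ from section~\ref{se:uc}, run backward in time from $t=T$; its well-posedness follows from the analytic semigroup generated by $A$ (the backward problem for $L$ is the time-reversed forward problem for $A$). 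The goal is to derive a boundary identity relating $\langle u_g(T),z\rangle$ to an integral of $g$ against the conormal flux of $v$ at the degenerate endpoint $x=0$.

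The heart of the computation is the integration-by-parts identity. Using the representation $u=y+(1-x^{1-\alpha})g$ from Theorem~\ref{theo3} (or multiplying $Pu=0$ by $v$ and $Lv=0$ by $u$ and subtracting, then integrating over $Q$), I would obtain
\begin{equation*}
\langle u_g(T),v(T)\rangle_{L^2(0,1)}
=\int_0^T g(t)\,\big[(x^\alpha v_x)(0,t)-\text{(trace terms at }x=0)\big]\,dt,
\end{equation*}
where all contributions at $x=1$ vanish because $u(1,t)=v(1,t)=0$, and the boundary behaviour at $x=0$ is controlled by Lemma~\ref{le:DA}, which guarantees that $x^\alpha v_x$ and the relevant weighted traces are well defined and that the terms pairing with $u(0,t)=g(t)$ collapse to the conormal flux of $v$. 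The hypothesis $\langle u_g(T),z\rangle=0$ for all $g$ then forces this boundary flux of $v$ to vanish for a.e.\ $t$, i.e.\ $(x^\alpha v_x)(0,t)=0$; combined with $v(0,t)=0$ (which follows from $v\in L^2(0,T;D(A))$ and the definition of $D(A)$), we conclude that $v$ satisfies precisely the hypotheses \eqref{hypUC} of Theorem~\ref{theo2} with $Lv\equiv0$.

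Finally I would invoke the unique continuation result: Theorem~\ref{theo2} yields $v\equiv0$ in $Q$, and in particular $z=v(\cdot,T)\equiv0$. This establishes that $\mathcal R^\perp=\{0\}$, hence $\mathcal R$ is dense, which is the desired approximate controllability. I expect the main obstacle to be the boundary-flux identity at the degenerate endpoint $x=0$: justifying the integration by parts rigorously requires care because $x^\alpha$ vanishes there, and one must use the regularity afforded by $D(A)$ together with the decay estimates \eqref{eq:DA1}--\eqref{eq:DA0} of Lemma~\ref{le:DA} to ensure every boundary term is finite and that only the single term coupling $g$ to the conormal derivative survives. A secondary technical point is confirming that $v$, defined as a backward solution with merely $L^2$ terminal data $z$, has enough regularity on $(0,T)$ to belong to the class $L^2(0,T;D(A))\cap H^1(0,T;L^2(0,1))$ required by Theorem~\ref{theo2}; this is supplied by the smoothing of the analytic semigroup $e^{tA}$ on the open interval, so the unique continuation statement may be applied on $(\tau,T)$ for any $\tau>0$ and then extended to $(0,T)$ by the analyticity argument already used at the end of section~\ref{se:uc}.
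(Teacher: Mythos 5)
Your overall strategy coincides with the paper's: reduce to $u_0\equiv 0$ by linearity, introduce the backward adjoint problem, derive a duality identity expressing $\langle u_g(T),z\rangle$ as an integral of $g$ against the conormal flux of the adjoint state at $x=0$, conclude from orthogonality that this flux vanishes, and invoke the unique continuation theorem of section~\ref{se:uc}. You also correctly anticipate the two technical points the paper must handle (justifying the integration by parts at the degenerate endpoint via Lemma~\ref{le:DA}, and the fact that the backward solution with $L^2$ terminal data only lies in $L^2(0,T-\eta;D(A))$ for each $\eta>0$, which the paper handles by truncating to $(0,T-\eta)$ and letting $\eta\downarrow 0$).

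There is, however, one genuine error in your setup of the adjoint problem. You impose $(x^\alpha v_x)(0,t)=0$ as a boundary condition of the backward problem, calling it ``the natural degenerate boundary behaviour''. In the weakly degenerate regime $0\le\alpha<1$ the adjoint of the Dirichlet-controlled problem is the homogeneous \emph{Dirichlet} problem: the paper's $\hat v$ solves \eqref{Pbm3} with $\hat v(0,t)=\hat v(1,t)=0$, i.e.\ $\hat v(t)=e^{(T-t)A}z$ with $A$ as defined in section~\ref{se:Carle}. The flux condition $(x^\alpha \hat v_x)(0,t)=0$ is the \emph{conclusion} of the orthogonality argument (Lemma~\ref{lem4}), not a boundary condition. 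If you impose it a priori, two things go wrong. First, the integration by parts no longer produces the term $\int_0^T g(t)(x^\alpha v_x)(0,t)\,dt$ --- that term is killed by your boundary condition, and the surviving contribution at $x=0$ is $-(x^\alpha u_{g,x})(0,t)\,v(0,t)$, which does not couple to $g$ in any usable way. Second, since you also assert $v(0,t)=0$ (correctly, from membership in $D(A)$), your adjoint problem already satisfies both hypotheses \eqref{hypUC}, so Theorem~\ref{theo2} of section~\ref{se:uc} would force $v\equiv 0$ for \emph{every} terminal datum $z$; the problem as you state it is overdetermined and has no solution for $z\neq 0$. The fix is exactly the paper's Lemmas~\ref{lem3} and~\ref{lem4}: take the Dirichlet adjoint problem, prove $\langle u_g(T),z\rangle=\int_0^T(x^\alpha\hat v_x)(0,t)g(t)\,dt$, deduce from $z\in\mathcal R(B)^\perp$ that the flux vanishes on $(0,T-\eta)$ for every $\eta>0$, and only then apply unique continuation. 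With that correction the rest of your argument goes through as written.
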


We start the proof with a  lemma.
\begin{lemma}
If the conclusion of Theorem~\ref{theo2} is true for $u_0 \equiv 0$, then it is true for any $u_0 \in H_{\alpha,0}^{1}\left( 0,1\right)$.
\label{lem2}
\end{lemma}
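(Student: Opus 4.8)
Lemma~\ref{lem2} asserts that it suffices to prove approximate controllability for the zero initial datum. The plan is to exploit the linearity of problem \eqref{Pbm1} together with the decomposition established in Theorem~\ref{theo3}, reducing the general case to the $u_0\equiv 0$ case by absorbing the effect of the initial datum into a shift of the target state.

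First I would observe that problem \eqref{Pbm1} is linear in the pair $(u_0,g)$. Given an arbitrary $u_0\in H_{\alpha,0}^{1}(0,1)$, I would consider the free evolution of $u_0$, namely the solution $z$ of \eqref{Pbm1} corresponding to the same initial datum $u_0$ but with homogeneous boundary control $g\equiv 0$. By Theorem~\ref{theo3} this $z$ exists, is unique, and lies in the stated regularity class; in particular $z(T)\in L^2(0,1)$ is a well-defined element. Now, for any control $g\in H^1_0(0,T)$, let $w_g$ denote the solution of \eqref{Pbm1} with zero initial datum and boundary control $g$. By linearity and the uniqueness provided by Theorem~\ref{theo3}, the solution $u_g$ of \eqref{Pbm1} with initial datum $u_0$ and control $g$ satisfies
\[
u_g = z + w_g \qquad\text{in } Q,
\]
and in particular $u_g(T)=z(T)+w_g(T)$ in $L^2(0,1)$.

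Next I would translate the control objective. Fix a target $u_T\in L^2(0,1)$ and $\epsilon>0$. Since we are assuming the conclusion of Theorem~\ref{theo2} holds for $u_0\equiv 0$, we may apply it with the \emph{shifted} target $\widetilde{u}_T:=u_T-z(T)$, which is again an element of $L^2(0,1)$. This yields a control $g\in H^1_0(0,T)$ such that the zero-initial-datum solution $w_g$ obeys $\|w_g(T)-\widetilde{u}_T\|_{L^2(0,1)}\le\epsilon$. Substituting $w_g(T)=u_g(T)-z(T)$ from the decomposition above gives
\[
\|u_g(T)-u_T\|_{L^2(0,1)}
=\|w_g(T)-\widetilde{u}_T\|_{L^2(0,1)}\le\epsilon,
\]
which is exactly the assertion of Theorem~\ref{theo2} for the general initial datum $u_0$.

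The argument is essentially bookkeeping, so there is no deep obstacle; the only point requiring care is the justification of the superposition identity $u_g=z+w_g$. This is where I would lean on Theorem~\ref{theo3}: one checks that $z+w_g$ solves \eqref{Pbm1} with datum $(u_0,g)$ in the mild (and indeed a.e.) sense, and then invokes the uniqueness statement of Theorem~\ref{theo3} to identify it with $u_g$. I would take care that $\widetilde{u}_T\in L^2(0,1)$ so that the hypothesised $u_0\equiv 0$ controllability is genuinely applicable, and that the control $g$ produced remains in $H^1_0(0,T)$ as required by \eqref{Pbm1}; both are immediate from $z(T)\in L^2(0,1)$ and from the assumed conclusion already delivering a control in the correct space.
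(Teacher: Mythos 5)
Your proof is correct and follows essentially the same route as the paper: the paper also introduces the free evolution $\hat u$ of $u_0$ with homogeneous boundary data, applies the zero-initial-datum case to the shifted target $u_T-\hat u(T)$, and concludes via the superposition identity $u_g(T)=\upsilon_g(T)+\hat u(T)$. Your added care in justifying that identity through the uniqueness part of Theorem~\ref{theo3} is a reasonable elaboration of what the paper states as ``one can easily see.''
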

\begin{proof}
Let $u_0 \in H_{\alpha,0}^{1}\left( 0,1\right)$ and $u_T \in L^2(0,1)$. Let $\epsilon >0$. Let us introduce $\hat{u}$ the (mild) solution of 
\\
\[
\left\{ 
\begin{array}{lc}
\hat{u}_{t}-(x^\alpha \hat{u}_{x})_{x}=0 &\quad \left( x,t\right) \in  Q\\ 
\hat{u}\left( 0,t\right) =0 & \quad t\in \left( 0,T\right) \\ 
\hat{u}\left( 1,t\right)=0 &\quad t\in \left(0,T\right) \\ 
\hat{u}\left( x,0\right) =u_{0}(x) & \quad x\in \left( 0,1\right).
\end{array}
\right.  
\]
\\
\noindent 
Then, $\hat{u}(T) \in L^2(0,1)$. Therefore, using the assumption of Lemma \ref{lem2}, there exists $g \in H^{1}_{0}(0,T)$ such that the solution $\upsilon_g$ of\\ 
\[
\left\{ 
\begin{array}{lc}
\upsilon_{t}-(x^\alpha \upsilon_{x})_{x}=0 &\quad \left( x,t\right) \in  Q\\ 
\upsilon\left( 0,t\right) =g(t) & \quad t\in \left( 0,T\right) \\
\upsilon\left( 1,t\right)=0 &\quad t\in \left(0,T\right) \\  
\upsilon\left( x,0\right) =0 & \quad x\in \left( 0,1\right).
\end{array}
\right.  
\]
\\
\noindent satisfies 
\[
\left\|\upsilon_g(T)-(u_T-\hat{u}(T))\right\|_{L^2(0,1)} \leq \epsilon.
\]
\noindent Yet, one can easily see that $u_g(T)=\upsilon_g(T)+\hat{u}(T)$, so that the proof of Lemma~\ref{lem2} is achieved. 
\end{proof}
We now assume that $u_0\equiv 0$. 
\begin{lemma}
For all $g \in H^{1}_{0}(0,T)$, for all $v \in L^2(0,1)$,
\begin{equation}
\left(u_g\left(T\right),v\right)_{L^2(0,1)}=\int _{0}^{T}(x^\alpha \hat{v}_x)(0,t)g(t)dt,
\label{egortho}
\end{equation}
\noindent where $\hat{v} \in C([0,T];L^2(0,1))\cap L^2(0,T;H^{1}_{\alpha,0})$ is the solution of
\begin{equation}
\left\{ 
\begin{array}{lc}
\hat{v}_{t}+(x^\alpha \hat{v}_{x})_{x}=0 &\quad \left( x,t\right) \in  Q\\ 
\hat{v}\left(t,0\right) =0 & \quad t\in \left( 0,T\right) \\
\hat{v}\left(t,1\right)=0 &\quad t\in \left(0,T\right) \\  
\hat{v}\left(T,x\right) =v(x) & \quad x\in \left( 0,1\right).
\end{array}
\right.  
\label{Pbm3}
\end{equation}
\label{lem3}
\end{lemma}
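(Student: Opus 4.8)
The plan is to prove the duality identity \eqref{egortho} by a Green-type computation: multiply the equation satisfied by $u_g$ by the adjoint solution $\hat v$, integrate over $Q$, and transfer every derivative onto $\hat v$ through integration by parts, so that the interior terms collapse to $\int_Q u_g\big(\hat v_t+(x^\alpha\hat v_x)_x\big)=0$ by \eqref{Pbm3}, while only two boundary contributions survive: the term $(u_g(T),v)_{L^2(0,1)}$ coming from the integration in $t$, and the term $\int_0^T g(t)(x^\alpha\hat v_x)(0,t)\,dt$ coming from the integration in $x$ at the degenerate endpoint $x=0$.

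First I would record the regularity available. By Theorem~\ref{theo3}, $u_g\in H^1(0,T;L^2(0,1))\cap L^2(0,T;H^1_\alpha(0,1))$ with $(x^\alpha u_{g,x})_x\in L^2(Q)$; moreover, by \eqref{expressionu} with $u_0\equiv 0$ we have $u_g=y+(1-x^{1-\alpha})g$ with $y(t)\in D(A)$, so that $x^\alpha u_{g,x}=x^\alpha y_x-(1-\alpha)g\in C([0,1])$ for a.e.\ $t$ and the conormal trace $(x^\alpha u_{g,x})(0,t)$ is finite. For the adjoint problem, reversing time shows $\hat v(t)=e^{(T-t)A}v$; since $e^{tA}$ is analytic, $\hat v(t)\in D(A)$ for every $t<T$, whence $\hat v(0,t)=\hat v(1,t)=0$ and $x^\alpha\hat v_x\in C([0,1])$, so the integrand $(x^\alpha\hat v_x)(0,t)$ appearing in \eqref{egortho} is well defined for a.e.\ $t$.

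Next I would carry out the two integrations by parts. In time, $\int_Q u_{g,t}\hat v=(u_g(T),v)_{L^2(0,1)}-\int_Q u_g\hat v_t$, using $u_g(\cdot,0)=0$ and $\hat v(\cdot,T)=v$. In space, to handle the degeneracy at $x=0$ I would integrate $\int_\rho^1 (x^\alpha u_{g,x})_x\hat v\,dx$ by parts twice on $(\rho,1)$ and then let $\rho\to0^+$. The endpoint terms at $x=1$ vanish by the homogeneous Dirichlet conditions; at $x=\rho$ the contributions converge, as $\rho\to0^+$, to their formal values at $x=0$, namely the term $(x^\alpha u_{g,x})\hat v$ tends to $0$ because $\hat v(0,t)=0$ while $x^\alpha u_{g,x}$ stays bounded, and the term involving $u_g\,(x^\alpha\hat v_x)$ leaves exactly $g(t)(x^\alpha\hat v_x)(0,t)$ since $u_g(0,t)=g(t)$. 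These limits are justified by the pointwise bounds of Lemma~\ref{le:DA} and the continuity of the conormal fluxes noted above. Collecting the pieces, the interior integral reduces to $\int_Q u_g\big(\hat v_t+(x^\alpha\hat v_x)_x\big)=0$, and one is left precisely with $0=(u_g(T),v)_{L^2(0,1)}-\int_0^T g(t)(x^\alpha\hat v_x)(0,t)\,dt$, which is \eqref{egortho}.

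The main obstacle is making these manipulations rigorous at the degenerate boundary and near $t=T$, where $v$, and hence $\hat v(\cdot,t)$ as $t\to T$, is only $L^2$. I would therefore first establish the identity for regular data, say $v\in D(A)$, for which $\hat v\in L^2(0,T;D(A))\cap H^1(0,T;L^2(0,1))$ and every integration by parts together with the limit $\rho\to0^+$ is fully justified; the bounded conormal traces furnished by Theorem~\ref{theo3} and Lemma~\ref{le:DA} are exactly what guarantees the boundary terms converge. I would then extend to general $v\in L^2(0,1)$ by density: the left-hand side of \eqref{egortho} is manifestly continuous and linear in $v$, and the right-hand side is continuous because $v\mapsto\hat v=e^{(T-\cdot)A}v$, together with the analytic smoothing that controls $(x^\alpha\hat v_x)(0,\cdot)$ on compact subintervals of $(0,T)$, depends continuously on $v$ in the relevant norms. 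Both sides being continuous in $v$ and agreeing on the dense set $D(A)$, the identity \eqref{egortho} holds for every $v\in L^2(0,1)$.
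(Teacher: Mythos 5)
Your overall strategy --- a Green/duality computation transferring derivatives onto $\hat v$, with a cutoff at $x=\rho$ to handle the degenerate endpoint --- is sound and amounts to the same integration-by-parts identity the paper uses. The difference lies in how the lack of regularity at $t=T$ is treated. The paper derives the pointwise-in-time identity $\frac{d}{dt}\left(u_g(t),\hat v(t)\right)_{L^2(0,1)}=(x^\alpha\hat v_x)(0,t)g(t)$, integrates it only up to $T-\eta$ (where $\hat v$ has full $D(A)$-regularity), and lets $\eta\downarrow 0$ using that both $u_g$ and $\hat v$ lie in $C([0,T];L^2(0,1))$; the right-hand side of \eqref{egortho} then converges as an improper integral simply because the left-hand side does. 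You instead regularize the datum (take $v\in D(A)$) and conclude by density. That is a legitimate alternative, but it shifts the burden onto showing that $v\mapsto\int_0^T(x^\alpha\hat v_x)(0,t)g(t)\,dt$ is a bounded linear functional on $L^2(0,1)$, and this is where your argument has a gap.

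Saying that analytic smoothing ``controls $(x^\alpha\hat v_x)(0,\cdot)$ on compact subintervals of $(0,T)$'' does not address the difficulty, which sits entirely at $t=T$, outside every such subinterval. Quantitatively, analyticity gives only $\|A\hat v(t)\|_{L^2(0,1)}\le C(T-t)^{-1}\|v\|_{L^2(0,1)}$, hence (via $x^\alpha\hat v_x\in H^1(0,1)\hookrightarrow C([0,1])$ with norm controlled by $|\hat v(t)|_{D(A)}$) only $|(x^\alpha\hat v_x)(0,t)|\le C\left(1+(T-t)^{-1}\right)\|v\|_{L^2(0,1)}$, which is not integrable near $T$. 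The density argument closes only if you also exploit that $g\in H^1_0(0,T)$ forces $|g(t)|=|g(t)-g(T)|\le\|g'\|_{L^2(0,T)}(T-t)^{1/2}$, so that the integrand is $O\left((T-t)^{-1/2}\right)$ and $\bigl|\int_0^T(x^\alpha\hat v_x)(0,t)g(t)\,dt\bigr|\le C\|g\|_{H^1_0(0,T)}\|v\|_{L^2(0,1)}$, uniformly in $v$. You never invoke the vanishing of $g$ at $t=T$, and without it neither the continuity of the right-hand side in $v$ nor even its absolute convergence is justified. Supply this estimate and your proof is complete; the paper's truncation at $T-\eta$ is precisely the device that avoids needing any quantitative trace bound near $t=T$.
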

\begin{proof}
  Let us multiply by $\hat{v}$ the equation satisfied by $u_g$. Then, integrating by parts with respect to the space variable, one has, for almost all $t \in (0,T)$,
\begin{equation}
\left(u_{g,t}(t),\hat{v}(t)\right)_{L^2(0,1)}+\int_{0}^{1}x^{\alpha/2}u_{g,x}(t)x^{\alpha/2}\hat{v}_x(t)dx=0.
\label{estimorthog1}
\end{equation}

\noindent Moreover, for all $\eta>0$, $\hat{v} \in L^2(0,T-\eta;D(A))\cap H^1(0,T-\eta;L^2(0,1))$. We multiply by $u_g$ the equation satisfied by $\hat{v}$ on $(0,T-\eta)$. After a standard integration by parts with respect to the space variable, one has, for a.e. $t \in (0,T-\eta)$,
\begin{equation}
\left(u_{g}(t),\hat{v}_t(t)\right)_{L^2(0,1)}-\int_{0}^{1}x^{\alpha/2}u_{g,x}(t)x^{\alpha/2}\hat{v}_x(t)dx=(x^{\alpha}\hat{v})_x(0,t)g(t).
\label{estimorthog2}
\end{equation}

\noindent Adding \eqref{estimorthog1} and \eqref{estimorthog2}, one gets, for a.e. $t \in (0,T-\eta)$,
\[
\frac{d}{dt}\left(u_{g}(t),\hat{v}(t)\right)_{L^2(0,1)}=(x^{\alpha}\hat{v})_x(0,t)g(t).
\]
\noindent Now, integrating over $(0,T-\eta)$ and recalling that $u_g(0)=u_0=0$, one obtains 
\begin{equation}
\left(u_{g}(T-\eta),\hat{v}(T-\eta)\right)_{L^2(0,1)}=\int_{0}^{T-\eta}(x^{\alpha}\hat{v})_x(0,t)g(t)dt.
\label{estimorthog3}
\end{equation}
\noindent Since $u_g \in C([0,T];L^2(0,1))$, $\hat{v} \in C([0,T];L^2(0,1))$ and $\hat{v}(T)=v$, one gets
\[
\left(u_g\left(T\right),v\right)_{L^2(0,1)}=\int _{0}^{T}(x^\alpha \hat{v}_x)(0,t)g(t)dt,
\]
\noindent passing to the limit as $\eta\downarrow 0$.
\end{proof}
Finally, define the control operator $B$ by
\begin{equation*}
B:  H^{1}_{0}(0,T)  \longrightarrow  L^2(0,1)\,,\quad
B:g  \longmapsto  u_g(T)
\end{equation*}
According to \eqref{estimsol}, $B \in \mathcal L(H^{1}_{0}(0,T),L^2(0,1))$. Then, problem \eqref{Pbm1} is approximately controllable if and only if the range of $B$ is dense in $L^2(0,1)$. This is equivalent to the fact that the orthogonal of $\mathcal R (B)$ is reduced to $\{0\}$.

\begin{lemma}
If $v \in \mathcal R (B)^{\bot}$, then $(x^\alpha \hat{v}_x)(.,0)\equiv 0$.
\label{lem4}
\end{lemma}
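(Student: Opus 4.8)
The plan is to combine the duality identity of Lemma~\ref{lem3} with the characterisation of approximate controllability by the triviality of $\mathcal R(B)^{\bot}$, and then to close the argument by density. Under the standing assumption $u_0\equiv 0$ the operator acts by $Bg=u_g(T)$, so that $v\in\mathcal R(B)^{\bot}$ means precisely
\[
\bigl(u_g(T),v\bigr)_{L^2(0,1)}=0\qquad\text{for every }g\in H^1_0(0,T).
\]
First I would insert this into the identity \eqref{egortho} furnished by Lemma~\ref{lem3}, which rewrites the left-hand side as a boundary integral. The orthogonality condition then becomes
\[
\int_0^T (x^\alpha\hat v_x)(0,t)\,g(t)\,dt=0\qquad\text{for every }g\in H^1_0(0,T).
\]

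Writing $\psi(t):=(x^\alpha\hat v_x)(0,t)$, the task reduces to deducing $\psi\equiv 0$ from the vanishing of $\int_0^T\psi\,g$ against every admissible $g$. The essential preliminary is to verify that $\psi$ is a genuine, locally integrable function of $t$, so that a fundamental-lemma argument can be applied. For this I would appeal to the backward smoothing of the adjoint problem \eqref{Pbm3}: although $v$ is merely in $L^2(0,1)$, the solution satisfies $\hat v\in L^2(0,T-\eta;D(A))$ for every $\eta>0$. Since every $w\in D(A)$ has $x^\alpha w_x\in H^1(0,1)\hookrightarrow C([0,1])$, the conormal trace at the origin is well defined and controlled by $|(x^\alpha w_x)(0)|\le C\,|w|_{D(A)}$; hence $\psi\in L^2(0,T-\eta)$ for every $\eta>0$, that is, $\psi\in L^2_{\mathrm{loc}}(0,T)$.

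Finally I would restrict the test functions to $C_c^\infty(0,T)\subset H^1_0(0,T)$. Each such $g$ is compactly supported inside $(0,T)$, so the integral involves $\psi$ only on a subinterval $(0,T-\eta)$ where it is square integrable, and the relation above reads $\int_0^T\psi\,g=0$ for all $g\in C_c^\infty(0,T)$. The du~Bois--Reymond lemma then forces $\psi=0$ a.e.\ on $(0,T)$, i.e.\ the conormal trace $(x^\alpha\hat v_x)(0,\cdot)$ vanishes identically, which is the assertion of the lemma.

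The main obstacle I anticipate lies not in the duality step, which is supplied by Lemma~\ref{lem3}, but in justifying that $\psi$ is a legitimate $L^2_{\mathrm{loc}}$ function to which a density argument can be pushed. This rests on extracting the correct regularity of $\hat v$ away from $t=T$ (the backward parabolic smoothing degrades as $t\uparrow T$) together with the embedding $D(A)\hookrightarrow\{w:x^\alpha w_x\in C([0,1])\}$ encoded in Lemma~\ref{le:DA}; choosing compactly supported test functions is exactly what circumvents the possible loss of integrability of $\psi$ at the endpoint $t=T$.
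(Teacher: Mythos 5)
Your proposal is correct and follows essentially the same route as the paper: both insert the orthogonality condition into the duality identity \eqref{egortho}, observe that $t\mapsto(x^\alpha\hat v_x)(0,t)$ lies in $L^2(0,T-\eta)$ for every $\eta>0$ (though possibly not in $L^2(0,T)$, because the backward smoothing of \eqref{Pbm3} degrades at $t=T$), and then test against compactly supported functions to conclude that the conormal trace vanishes on each $(0,T-\eta)$, hence on $(0,T)$. The only cosmetic difference is that you invoke du~Bois--Reymond where the paper uses density of $\mathcal D(0,T-\eta)$ in $L^2(0,T-\eta)$, and your continuity of $x^\alpha w_x$ for $w\in D(A)$ follows directly from $x^\alpha w_x\in H^1(0,1)$ rather than from Lemma~\ref{le:DA}.
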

\begin{proof}
Take $v \in \mathcal R (B)^{\bot}$. According to \eqref{egortho}, for all $g \in H^{1}_{0}(0,T)$, 
\[
\int _{0}^{T}(x^\alpha \hat{v}_x)(0,t)g(t)dt=0.
\]
\noindent Even if $t\longmapsto (x^\alpha \hat{v}_x)(0,t)$ is not a-priori in $L^2(0,T)$, we can conclude that $(x^\alpha \hat{v}_x)(.,0)\equiv 0$. Indeed, take $\eta>0$. Take $g \in \mathcal D(0,T-\eta)$ and set $g\equiv 0$ on $(T-\eta,T)$. Then $g \in H^{1}_{0}(0,T)$ and 
\[
0=\int _{0}^{T}(x^\alpha \hat{v}_x)(0,t)g(t)dt=\int _{0}^{T-\eta}(x^\alpha \hat{v}_x)(0,t)g(t)dt.
\]
\noindent Yet, $t\longmapsto (x^\alpha \hat{v}_x)(0,t) \in L^2(0,T-\eta)$, so that, by density, for all $g \in L^2(0,T-\eta)$,
\[
\int _{0}^{T-\eta}(x^\alpha \hat{v}_x)(0,t)g(t)dt=0\,.
\]
Therefore, $(x^\alpha \hat{v}_x)(\cdot,0)\equiv 0$ on $(0,T-\eta)$ for all $\eta>0$.  
\end{proof}
In order to complete the proof of Theorem \ref{theo2}, we just need to apply our unique continuation result: since the solution $\hat{v}$ of \eqref{Pbm3} satisfies $(x^\alpha \hat{v}_x)(.,0)\equiv 0$ on $(0,T)$, we have that $\hat{v}(T)=v=0$.

\begin{remark}
Theorem~\ref{theo2} yields the approximate controllability in $L^2(0,1)$ of problem \eqref{Pbm1}, as is easily seen arguing as follows. Let  $T>0$, let $\epsilon>0$ and let $u_0, u_T\in L^2(0,1)$. Set $u_1=e^{TA/2}u_0$ and observe that, since the semigroup is analytic, $u_1\in H^1_{\alpha,0}(0,1)$. Therefore, owing to Theorem~\ref{theo2}, there exists $g_1\in H^1_0(T/2,T)$ such that the solution of the problem
\begin{equation*}
\left\{ 
\begin{array}{lc}
u_{t}-(x^\alpha u_{x})_{x}=0 &\quad \left( x,t\right) \in  \left(0,1\right) \times \left(T/2,T\right) \\ 
u\left( 0,t\right) =g_1(t) & \quad t\in \left( T/2,T\right) \\ 
u\left(1,t\right)=0 &\quad t\in \left(T/2,T\right) \\ 
u\left( x,T/2\right) =u_{1}(x) & \quad x\in \left( 0,1\right).
\end{array}
\right.  
\end{equation*}
satisfies $\left\|u(T)-u_T\right\|_{L^2(0,1)} \leq \epsilon$. Thus, a boundary control $g$ for  \eqref{Pbm1} which steers the system into an $\epsilon$-neighborhood of $u_T$ is given by
\begin{equation*}
g(t)=
\begin{cases}
0
&t\in[0,T/2)
\\
g_1(t) & t\in [T/2,T]\,.
\end{cases}
\end{equation*}
\end{remark}

\section*{Acknowledgement}
We would like to thank the referee who caught numerous errors in an earlier draft of the paper.

\end{document}